\pgfplotsset{compat=1.17}
\newtheorem{theorem}{Theorem}[section]
\newtheorem{lem}[theorem]{Lemma}
\newtheorem{prop}[theorem]{Proposition}
\newtheorem{coro}[theorem]{Corollary}
\newtheorem{rem}{Remark}[section]
\newtheorem{assum}{Assumption}
\theoremstyle{definition}
\newtheorem{problem}{Problem}
\newcommand\R{\mathbb{R}}
\newcommand\U{\mathcal{U}}
\newcommand\sca[2]{\langle #1, #2 \rangle}
\author{Clara Leparoux \footnotemark[1] \footnotemark[2]\and Bruno H\'{e}riss\'{e} \thanks{ Clara Leparoux ({clara.leparoux@ensta-paris.fr}) and Bruno H\'{e}riss\'{e} ({bruno.herisse@onera.fr}) are with DTIS, ONERA, Université Paris-Saclay, F-91123 Palaiseau, France.
        } \and Fr\'{e}d\'{e}ric Jean \thanks{ Clara Leparoux and Fr\'{e}d\'{e}ric Jean ({frederic.jean@ensta-paris.fr}) are with UMA, ENSTA Paris,
        Institut Polytechnique de Paris, 91120 Palaiseau, France.
        }
        }
\date{}
\begin{document}

\title{\LARGE \bf
Structure of optimal control for planetary landing with control and state constraints}

\maketitle



\begin{abstract}
This paper studies a vertical powered descent problem in the context of planetary landing, considering glide-slope and thrust pointing constraints and minimizing any final cost. In a first time, it proves the Max-Min-Max or Max-Singular-Max form of the optimal control using the Pontryagin Maximum Principle, and it extends this result to a problem formulation considering the effect of an atmosphere. It also shows that the singular structure does not appear in generic cases. 
In a second time, it theoretically analyzes the optimal trajectory for a more specific problem formulation to show that there can be at most one contact or boundary interval with the state constraint on each Max or Min arc.
\end{abstract}

\section{Introduction}
Interest for vertical landing problems has increased these recent years with the growing use of reusable launchers and as space exploration missions are requiring ever more landing precision. Vertical landing consists in two phases$:$ first an entry phase, that set the vehicle in appropriate position and velocity conditions above the target to ensure landing feasibility, and then a phase of powered descent. Prior to the powered descent, the reference trajectory has to be recalculated in order to correct dispersions accumulated during the entry phase (\cite{Blackmore2016}), or handle an update of the landing site. This motion planning problem is usually treated in an optimal control framework, allowing to minimize a quantity such as fuel consumption, flight time, or landing errors to ensure soft landing. 
\par However, solving an optimal control problem efficiently and accurately remains a challenge; moreover, analytical solutions, computed thanks to the Pontryagin Maximum Principle, are only known for simplified models with few constraints, as in \cite{Acikmese2007} and \cite{Lu2018}. This paper is concerned with the following dynamic model, expressed in an inertial frame $(e_x, e_y, e_z)$,
\begin{equation}\label{eq:dyn}
\begin{cases}
\dot{r} &= v, \\
\dot{v} &= \frac{T}{m} u - g, \\
\dot{m} &= -q \Vert u \Vert,\\
\end{cases}
\end{equation}
\noindent
where $r(t)= (x(t), y(t), z(t)) \in \R^3$ is the vehicle position, $v(t) \in \R^3$ is its velocity, $m(t)>0$ is its mass, $q$ the maximal mass flow rate of the engine, $T >0$ the maximal thrust and $g = (0,0,g_0)$ with $g_0$ the gravitational constant. The thrust is controlled by the vector $u(t) \in \R^3$, where $||u|| \leq 1$ is the engine throttle. Previous theoretical and numerical studies tend to show that the structure of the optimal control for dynamics \eqref{eq:dyn} generally consists of a finite succession of arcs on which the control norm is constant. Indeed, it is shown in \cite{Meditch1964} that the optimal control of the one dimension fuel-optimal problem with a bounded control follows an Off-Bang structure, i.e. it has a period of off thrust followed by full thrust until touchdown. Then, the Bang-Off-Bang structure, called Max-Min-Max when the thrust is not allowed to go to zero, has been found to be optimal for numerous variants of the problem, such as the two-dimensional problem studied in \cite{LEITMANN1959}, the problem with specified initial and final thrust studied in \cite{Topcu2005}, or with throttle and thrust angle control in \cite{Topcu2007}. Recently, \cite{Gazzola2021} and \cite{Menou2021} showed that the Max-Min-Max structure is the solution of the one dimension problem, minimizing the final time for the first one and considering the effect of an atmosphere on the thrust for the latter. However, there lack theoretical studies relating more complex formulations of the landing problem, for instance considering realistic technical and safety constraints. Among the existing numerical methods, a noteworthy one is presented in \cite{Acikmese2007, blackmore2010, acikmese2013}, that succeeds to solve efficiently the problem including constraints on the thrust direction and the launcher position, thanks to a convexification method. The simulations carried out in these studies also reveal a Max-Min-Max form of control.\par
The purpose of this paper is to analyze a vertical landing problem considering relevant control and state constraints to highlight the rigidity of the solution. Indeed, we show that the structure of the control remains identical after changing the cost or adding constraints, which suggests that it is not specific to the problem formulation (we can even modify the dynamic model to take into account the effect of an atmosphere). We will study at first the problem with dynamics \eqref{eq:dyn} and we will consider, in addition to the bound on the control norm, a thrust pointing constraint limiting the amplitude of the control direction as well as the launcher orientation, both for safety reasons and to model actuator limitations. Then, we also take into account a glide-slope constraint, forcing the launcher position to stay inside a cone centered on the target, to ensure that the vehicle remains at a safe altitude and to guarantee sensor operability. The cost of the optimal control problem considered is expressed as a final cost, which embraces the most common applications such as maximizing the final mass or minimizing the final time. In this framework (defined later as Problem \ref{pb:completePb}), we can show that the optimal control has either a Max-Min-Max form or a Max-Singular-Max form as stated in the following theorem
\begin{theorem}\label{th:BOB}
Consider an optimal trajectory on $[0,t_f]$. Then, the control $u(t)$ is in the Max-Min-Max or the Max-Singular-Max  form, i.e. there exists $t_1$ and $t_2$ with $0 \leq t_1 \leq t_2 \leq t_f$ such that 
\[\|u\|(t) =
\begin{cases}
& u_{max} \; \normalfont\text{if} \; t \in [ 0, t_1)\cup (t_2, t_f], \\
& u_{min} \;  \normalfont\text{or singular if} \; t \in [t_1, t_2].\\
\end{cases}
\]
\end{theorem}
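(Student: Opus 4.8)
The plan is to apply the Pontryagin Maximum Principle in the form adapted to state constraints. Introduce the Hamiltonian
\[
H = \sca{p_r}{v} + \sca{p_v}{\tfrac{T}{m}u - g} - p_m\, q\|u\|,
\]
with costates $(p_r,p_v,p_m)$ and a nonnegative measure $d\eta$ associated with the glide-slope constraint $g(r)\le 0$. The adjoint equations read $dp_r = -\nabla_r g\, d\eta$, $\dot p_v = -p_r$ and $\dot p_m = \tfrac{T}{m^2}\sca{p_v}{u}$, the first being where the state constraint enters; transversality and nontriviality fix the free data at $t_f$.

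Next I would decouple the control into its norm $\rho=\|u\|\in[u_{min},u_{max}]$ and its direction $d=u/\|u\|$, the latter constrained to the thrust-pointing cone $C$. Maximizing $H$ over $d$ at fixed $\rho$ replaces $\sca{p_v}{d}$ by the support function $\phi(t):=\max_{d\in C}\sca{p_v(t)}{d}$, so that the norm is governed by the switching function
\[
\psi := \tfrac{T}{m}\,\phi - q\, p_m,
\]
with $\rho=u_{max}$ when $\psi>0$, $\rho=u_{min}$ when $\psi<0$, and a singular regime when $\psi\equiv 0$ on an interval. Proving the theorem then reduces to showing that the sublevel set $\{\psi\le 0\}$ is a single interval $[t_1,t_2]$.

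The key computation is the derivative of $\psi$. Differentiating and substituting $\dot m=-q\rho$ together with $\dot p_m=\tfrac{T}{m^2}\rho\,\phi$ (the optimal direction gives $\sca{p_v}{u}=\rho\phi$), the two mass-related terms cancel exactly, leaving $\dot\psi=\tfrac{T}{m}\,\dot\phi$. Since $T/m>0$, the sign of $\dot\psi$ is that of $\dot\phi$. On any interval where the glide-slope constraint is inactive, $p_r$ is constant, hence $p_v$ is affine in $t$; as $\phi$ is a support function it is convex, and the composition of a convex function with an affine map is convex, so $\dot\phi$ is nondecreasing. Thus $\dot\psi$ changes sign at most once, from negative to positive, $\psi$ first decreases then increases, and $\{\psi\le 0\}$ is an interval. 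This yields the Max-Min-Max or Max-Singular-Max structure.

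The main obstacle is the glide-slope state constraint, which injects the measure $d\eta$ into $dp_r$ and destroys the affine, hence $\phi$-convex, behaviour of $p_v$ on boundary arcs. To close the argument one must control the contribution of $\nabla_r g\, d\eta$ to the variation of $\dot\phi$ and show that, given the sign $d\eta\ge 0$ and the geometry of the glide-slope cone relative to the direction of $p_v$, the single-sign-change property of $\dot\phi$ survives; a secondary technical point is to justify the support-function reduction and the cancellation in $\dot\psi$ at junctions where $\phi$ or the active set is only piecewise smooth. I expect this state-constraint sign analysis to be the crux, with the transversality conditions then determining which of the outer Max arcs may be empty.
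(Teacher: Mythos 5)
Your reduction to the switching function and your treatment of the constraint-free arcs are sound, and in fact slightly cleaner than the paper's: writing $\phi(t)=\max_{d\in C}\sca{p_v(t)}{d}$ as a support function, noting that a pointwise maximum of linear forms is convex and that $p_v$ is affine in $t$ where the glide-slope constraint is inactive, gives the monotonicity of $\dot\phi$ in one line, where the paper instead computes $\frac{d}{dt}\sca{p_v}{d}=-\sca{q_r}{d}$ case by case (pointing constraint active or not) and invokes Cauchy--Schwarz to get $\frac{d}{dt}\sca{q_r}{d}=-\sca{\dot p_v}{\dot d}\le 0$ (its Lemma~\ref{le:signPrd} and Corollary~\ref{le:coro_free_arcs}); the two arguments are equivalent, yours packages the case analysis into convexity. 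The cancellation $\dot\psi=\frac{T}{m}\dot\phi$ is also correct and is exactly the paper's Point~2 of Proposition~\ref{le:prop}.

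However, the proof is not complete: the step you defer to the end --- showing that the measure term in $q_r$ does not destroy the single sign change of $\dot\phi$ --- is precisely the hard part of the theorem, and you give no argument for it. The paper closes this gap with two ingredients you would need to supply. First, Lemma~\ref{le:signe_nd}: at any time $t_0<t_f$ where $h(r(t_0))=0$, one has $\sca{n(t_0)}{d(t_0)}>0$. This is \emph{not} a purely geometric fact about the cone and $p_v$; it is proved dynamically, by observing that $t_0$ is a minimum of $h[t]=h(r(t))$, so $\dot h[t_0]=0$ and $\int_{t_0}^{t^k}\ddot h\,dt\ge 0$ along a sequence $t^k\downarrow t_0$, and then using $\ddot h=\frac{T}{m}\|u\|\sca{n}{d}-g_0+\sca{\dot n}{v}$ with $\sca{\dot n}{v}\le 0$ to force $\frac{T}{m}\|u\|\sca{n}{d}\ge g_0>0$ in the limit. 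Second, the local estimate of Lemma~\ref{le:hypo2}: writing the increment of $\sca{q_r}{d}=-\frac{m}{T}\dot\psi$ near a contact as $I_1+I_2-I_3$, the absolutely continuous part $I_1$ is $\le 0$ by the convexity argument, the cross term $I_2$ is $O(t-t_0)\int_{[t_0,t)}\mu(ds)$, and the measure term $I_3\ge\frac{1}{2}\sca{n(t_0)}{d(t_0)}\int_{[t_0,t)}\mu(ds)$ dominates $I_2$ because $\sca{n(t_0)}{d(t_0)}>0$; hence the measure only pushes $\dot\phi$ \emph{upward}, preserving its monotonicity. Without establishing the sign of $\sca{n}{d}$ at contacts, one cannot rule out that a boundary arc kicks $\dot\phi$ downward and creates additional switchings. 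There is also a secondary regularity issue the paper treats separately (the set where $p_v=0$, or $\overline p_v=0$ with $p_{v_z}<0$, on which the maximizing direction $d$ is not a function of $p_v$; see Point~3 of the proof of Proposition~\ref{le:prop}), though your support-function formulation mitigates part of it since $\phi$ itself remains Lipschitz there.
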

\begin{rem}
When $t_1 = 0$ or $t_2 = t_f$, the Max-Min-Max or Max-Singular-Max form degenerates in a Max, Min, Singular, Max-Min, Max-Singular, Min-Max or Singular-Max form.
\end{rem}
This result has been presented without proof in \cite{CDC2022}; here we will detail the complete proof of Theorem \ref{th:BOB}. Moreover, without the pointing constraint, we show that this result stays true when considering the effect of the atmosphere as a constant pressure. Finally, we analyze a particular case, in which the mass is assumed constant and the glide-slope constraint reduced to a positive altitude constraint, to show that in that configuration there are at most three contacts with the state constraint. Besides, the numerical results available make think that this outcome is likely to be generalized. 
\par This paper is organized as follows. In Section II, the full optimal control problem with the constraints considered is presented, and the optimality conditions given by application of PMP are detailed. In Section III, the structure of the optimal control is stated and the proof is exposed. Section IV studies a particular case of the landing problem to show that the number of contacts with the state constraint is limited. Finally, Section V provides numerical results.

\section{Problem statement} \label{sec:2}
\subsection{Formulation}
The study will concern vehicles having dynamics in the form of \eqref{eq:dyn}. The initial mass of the vehicle will be denoted by $m_0$ and its empty mass by $m_e$. The state is denoted by $X=(r,v,m) \in \R^3 \times \R^3 \times \R$. It will consider the following constraints:
\begin{itemize}
    \item bounds on the control norm, 
    \[ u_{min} \leq \| u(t) \| \leq u_{max}, \]
    \item a pointing constraint, formulated as \[ \sca{e_z}{u} \geq \Vert u \Vert \cos(\theta),\ \text {with} \ \theta \in [0, \frac{\pi}{2}),\]
    \begin{center}
    \begin{figure}
        \begin{minipage}[b]{0.5\linewidth}
        \centering
    \begin{tikzpicture}[scale=0.55]
\def \tf{5}
\def \hmax{4}
\def \thet{60}

\draw[very thick, ->] (0,-\hmax)  -- (0,\hmax) node[left, pos=1.1]{\large $\frac{p_{v_z}}{\|p_v\|}$};
\draw[line width=0.8mm, ->] (0,0) -- (0,0.6*\hmax)node[right]{$e_z$}  -- (0,0.8*\hmax) ;

\draw[very thick, ->] (-\hmax,0) -- (\hmax,0) node[below]{\large $\frac{p_{v_x}}{\|p_v\|}$};
 \fill[gray, opacity = 0.2] (0,0) circle (0.8*\hmax);
 \draw (0,0) circle (0.8*\hmax);
 
\draw (0,0) --({0.9*\hmax*sin(\thet)},{0.9*\hmax*cos(\thet)}) arc ({90-\thet}:{90+\thet}:.9*\hmax) --(0,0) -- cycle ;
\fill[blue, opacity = 0.2] (0,0) --({0.9*\hmax*sin(\thet)},{0.9*\hmax*cos(\thet)}) arc ({90-\thet}:{90+\thet}:.9*\hmax) --(0,0) -- cycle ;

\draw[dashed,line width=0.4mm,  ->] ++(0, \hmax*0.4) arc ({90}:{90-\thet}:.4*\hmax)node[pos=0.3, below]{$\theta$} ; 
\draw[red, line width=0.8mm, ->] (0,0) --  ({-0.8*\hmax*sin(30)},{0.8*\hmax*cos(30)})node[pos = 0.8, right]{$1$}  ;

\draw[black!50!green, line width=0.8mm, ->] (0,0) --  ({0.8*\hmax*cos(-30)},{0.8*\hmax*sin(-30)})node[pos=0.7,above]{$2$} ;

\draw[black!50!green, dashed, line width=0.8mm, ->] (0,0) --({0.8*\hmax*sin(\thet)},{0.8*\hmax*cos(\thet)}) ; 

\draw[black!50!green, dashed, line width=0.8mm, ->] ({0.7*\hmax*cos(-30)},{0.7*\hmax*sin(-30)}) to [bend right = 30] ({0.7*\hmax*cos(90-\thet)},{0.7*\hmax*sin(90-\thet)});

\draw [blue, thick, opacity = 1, ->, align = center](0.3*\hmax, 1*\hmax)node[right]{Pointing \\ constraint\\ cone} to [bend right = 30]  (0.1*\hmax, 0.85*\hmax)   ;

\end{tikzpicture}     
    \captionof{figure}{The thrust pointing constraint. Red: nonsaturating orientation. Green: saturating orientation. }
        \end{minipage}
        \begin{minipage}[b]{0.5\linewidth}
        \centering
        
    \begin{tikzpicture}[scale = 0.85]

\def \hmax{4cm}
\def \gam{45}
\def \H{3cm}

\draw (-\H,{\H/2*cos(60)}) -- ({\H + \H/2*sin(60)},{\H/2*cos(60)}) -- (\H ,{-\H/2*cos(60)}) -- ({-\H - \H/2*sin(60)},-{\H/2*cos(60)}) -- (-\H ,{\H/2*cos(60)}) -- cycle;

\draw[->](-\H,{\H/2*cos(60)}) -- (-\H ,{\H/2*cos(60)+ 0.7\H}) node[left]{$e_z$};
\draw[->](-\H,{\H/2*cos(60)}) -- (-\H + 0.7\H,{\H/2*cos(60)}) node[above]{$e_y$};
\draw[->](-\H,{\H/2*cos(60)}) -- ({-\H - 0.7\H/2*sin(60)} ,{\H/2*cos(60)- 0.7\H*cos(60)}) node[right]{$e_x$};

\fill [opacity = 1, white]({\hmax*cos(\gam)}, {\hmax*sin(\gam)}) -- (0,0) -- (-{\hmax*cos(\gam)}, {\hmax*sin(\gam)}) arc (180:360:{\hmax*cos(\gam)} and 0.1*\hmax);
\draw ({\hmax*cos(\gam)}, {\hmax*sin(\gam)}) -- (0,0) -- (-{\hmax*cos(\gam)}, {\hmax*sin(\gam)}) ;
\fill[opacity = 0.1] (0, {\hmax*sin(\gam)}) circle({\hmax*cos(\gam)} and 0.1*\hmax);
\draw (0, {\hmax*sin(\gam)}) circle({\hmax*cos(\gam)} and 0.1*\hmax);

\draw[thick] plot[mark=*, mark size=0.03\hmax] (0, 0) node[below]{$(x_f, y_f, 0)$};
\draw[thick] plot[mark=*, mark size=0.03\hmax] (\hmax/5, {\hmax*sin(\gam)}) coordinate (r0)  node[left]{$r_0$};
\draw[very thick]  plot[smooth] coordinates{(r0)  ({0.46*\hmax*cos(\gam)}, {0.5*\hmax*sin(\gam)}) ({0.1*\hmax*cos(\gam)}, {0.3*\hmax*sin(\gam)}) (0,0)};
\draw[->] (r0) -- (0.27*\hmax, {0.8*\hmax*sin(\gam)})node[right]{$v_0$};


\draw[gray] (0,0) --(\hmax/4,0); 
\draw[->] (\hmax/7,0) to [bend right = 30] ({0.15*\hmax*cos(\gam)}, {0.15*\hmax*sin(\gam)});
\draw (\hmax/6, 0.2\hmax)node[right]{$\gamma$} ;


\draw[->, thick] (\hmax/2,\hmax/3)node[right]{Optimal trajectory} to [bend right = 30] ({0.5*\hmax*cos(\gam)}, {0.6*\hmax*sin(\gam)});
\draw (\hmax/2,\hmax/3)  ;


\draw[align=center, ->] (0.7*\hmax,0.8*\hmax)node[right]{Glide-slope \\ constraint cone} to [bend right = 30] (\hmax/2,0.7*\hmax);

\draw[->, thick] ({0.45*\hmax*cos(\gam)}, {0.47*\hmax*sin(\gam)}) -- ({0.3*\hmax*cos(\gam)}, {0.62*\hmax*sin(\gam)})node[left]{$n$};

\end{tikzpicture}  
    \captionof{figure}{Glide-slope constraint}
    \vspace{30pt}
        \end{minipage}
    
    \end{figure}
    \end{center}
    \item a glide-slope constraint, \[ h(r) = z - \tan(\gamma) \Vert (x, y) \Vert \geq 0, \ \text {with} \ \gamma \in [0, \frac{\pi}{2}). \]
\end{itemize}

The landing problem will minimize a final cost : \[ \min J := \ell(t_f, X(t_f)),\]
where $t_f$ is the final time. 

Thus, the complete optimal control problem can be written
\begin{problem}\label{pb:completePb}
\[
\min   \ell(t_f, X(t_f)) \text{ such that}
\begin{cases} 
											\begin{array}{l}
													\left(r(\cdot), v(\cdot), m(\cdot)\right)      \in \R^3\times\R^3\times \R \;      \text{follows \eqref{eq:dyn}}, \\
													(r,v,m)(0)=\left(r_0, v_0 ,m_0\right), \\
													(z,v_z)(t_f)=\left(0, 0 \right), \\
													m(t) > m_e \; \forall t \in [0,t_f),\\
													u_{min} \leq ||u|| \leq u_{max},  \\
													\langle e_z, u \rangle \geq \Vert u \Vert      \cos(\theta),\\
													h(r) \geq 0. \\
											\end{array}
										
\end{cases}
\]

\end{problem}
\begin{rem}
Note that one can not have $m(t) = m_e$ for some $t\in [0, t_f)$. Indeed it would imply that $u=0$ on $[t,t_f]$, which makes it impossible to reach $z(t_f)=0$ with $v_z(t_f)=0$. Thus, the mass constraint $m(t) > m_e$ plays no role in the problem. 
\end{rem}

\begin{rem}
Note that the existence of optimal solutions for Problem 1 results from an adaptation of the results in \cite{Acikmese2007}. Indeed, the existence if proved in \cite{Acikmese2007} for the same problem formulation but without considering the pointing constraint. The proof consists in first convexifying the constraints and showing the existence of an optimal solution for the convex formulation of the problem \cite[Theorem $1$]{Acikmese2007}. This part of the reasoning still works when we add the convexified pointing constraint. Then, it is necessary to show that optimal solutions of the convexified problem are also optimal for the non convex formulation of the problem. This is done in \cite[Lemma $2$]{Acikmese2007} without the pointing constraint, and in \cite[Lemma $1$]{acikmese2013} when considering the pointing constraint.
\end{rem}

\subsection{Optimality conditions}
\label{se:optimality_conditions}

The necessary optimality conditions provided by the application of the Pontryagin Maximum Principle to the optimal control Problem \ref{pb:completePb} are given in this section. Let us firstly specify some notations.

\begin{itemize}
  \item For $p = (p_x, p_y, p_z) \in \R^3$, we set $\overline{p}=(p_x, p_y)$. Thus the glide-slope constraint writes as $h(r) = r_z - \tan(\gamma) \Vert \overline{r} \Vert$ and
      \begin{equation}\label{}
        n= \nabla h (r) = \begin{pmatrix} - \tan(\gamma)\frac{\overline r}{\Vert \overline r \Vert } \\ 1 \end{pmatrix}.
      \end{equation}
  
  \item The control set is $\U = \lbrace u \in \R^3 \ : \ u_{min} \leq \| u\| \leq u_{max} \ \text{and} \ \langle e_z, u \rangle \geq \| u\| \cos(\theta) \rbrace$.
\end{itemize}
We apply the version of the maximum principle given in \cite[Th.\ 9.5.1]{Vinter2010}, which in our case writes as follows.
Define the Hamiltonian of Problem \ref{pb:completePb} as
\begin{equation}\label{eq:H}
H (X,P,u,p^0) =  \langle p_r, v  \rangle + \langle p_v, \frac{T}{m} u - g  \rangle- p_m q \| u\|,
\end{equation}
where the adjoint vector $P = (p_r, p_v, p_m)$ belongs to $\R^3 \times \R^3 \times \R$ and $p^0 \in \R$.

Let $(X(\cdot),u(\cdot))$ be an optimal solution of Problem \ref{pb:completePb}. Then there exists a constant $p^0=0$ or $-1$, an absolutely continuous function $P(\cdot)$, and a nonnegative Borel measure $\mu$ on $[0,t_f]$, such that, writing
\begin{equation}
\label{eq:def_q}
Q(t)=\left( q_r(t) ,p_v(t), p_m(t) \right),  \qquad q_r(t)=p_r(t) - \int_{[0,t)} n(s)  \mu (ds),
\end{equation}
we have:
\begin{enumerate}
  \item $(P,p^0,\mu) \neq (0,0,0)$;
  \item $\mathrm{supp} \{\mu\} \subset \{t \in [0,t_f] \ : \  h(r(t))=0 \}$;
\item (dynamics of the adjoint vector) for a.e.\ $t \in [0,t_f]$,
\begin{equation}\label{}
\begin{cases}
\dot{p}_r(t) &= 0 ,\\
\dot{p}_{v}(t) &= - q_r (t),\\
\dot{p}_m(t) &=\frac{T}{m(t)^2} \left \langle p_v(t), u(t) \right \rangle;\\
\end{cases}
\end{equation}
  \item (maximization condition) for a.e.\ $t \in [0,t_f]$,
  \begin{equation}\label{}
    H(X(t), Q(t), p^0, u(t)) = \max \limits_{w \in \U} H(X(t), Q(t), p^0, w);
  \end{equation}
  \item (transversality condition) 
  \begin{equation}\label{eq:trCond}
    \max \limits_{w \in \U} H(X(t_f), Q(t_f), p^0, w) = -p^0\frac{\partial \ell}{\partial t}(t_f, X(t_f)).
  \end{equation}
\end{enumerate}

\section{General results}
\subsection{Solution of the optimal control problem}

Thanks to the application of the Pontryagin Maximum Principle, several results on the evolution of the control can be given. Those results will be necessary to deduce the general form of the control with respect to time. For $d = 1,2$, we denote by $\mathcal{S}^d$ the unit sphere of $\R^{d+1}$. Remind also that, for $p_v = (p_{v_x}, p_{v_y}, p_{v_z})$, we set $\overline{p}_v=(p_{v_x}, p_{v_y})$.

\begin{lem} \label{lem: sol_u}
Let $u(t)$, $t\in [0, t_f]$, be an optimal control of Problem \ref{pb:completePb}, and $P = (p_r, p_v, p_m) \in \R^3 \times \R^3 \times \R$ its adjoint vector.
Then, for any $t \in [0,t_f]$ such that $u(t) \neq 0$, there holds
\[
 \frac{u(t)}{\|u(t)\|} = d(t),
\]
where $d: [0, t_f] \rightarrow \mathcal{S}^2$ is a measurable function satisfying
\begin{equation}
d(t)  =
\begin{cases}
\frac{p_v(t)}{\Vert p_v(t) \Vert} \; & \normalfont\text{if}\, p_{v_z}(t) \geq \Vert p_v(t) \Vert \cos(\theta) \ \text{and} \ p_v(t) \neq 0,\\[1mm]
 \left( \sin(\theta) \frac{\overline p_v(t)}{\Vert \overline p_v(t) \Vert},   \cos(\theta) \right) \,  & \normalfont\text{if}\, p_{v_z}(t) < \Vert p_v(t) \Vert \cos(\theta) \ \text{and} \ \overline p_v(t) \neq 0,\\[1mm]
\Big( \sin(\theta) \delta,  \cos(\theta) \Big) \ \normalfont \text{with}\ \delta \in \mathcal{S}^1 \ & \normalfont\text{if}\ p_{v_z}(t) < \Vert p_v(t) \Vert \cos(\theta) \ \text{and} \ \overline p_v(t) = 0.
\end{cases}
\end{equation}
Moreover, set
\begin{equation}\label{eq:psi}
\Psi(t) = \frac{T}{m}\langle  p_v(t),  d(t)  \rangle - p_m(t) q.
\end{equation}
Then,
\begin{center} \normalfont
$
\Vert u(t) \Vert =
\begin{cases}
u_{max}\, &\text{if}\; \Psi(t) > 0,\\
u_{min} \,&\text{if}\; \Psi(t) < 0.
\end{cases}
$
\end{center}
\end{lem}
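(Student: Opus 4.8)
The plan is to exploit that only the last two terms of the Hamiltonian \eqref{eq:H} depend on the control, so that the maximization condition amounts to maximizing
\[
f(u) = \frac{T}{m}\sca{p_v}{u} - p_m q \, \|u\|
\]
over $u \in \U$. Crucially this quantity involves $p_v$ and $p_m$ but not $q_r$, which is why the conclusion can be phrased in terms of $p_v$ alone. I would decouple the maximization by writing $u = \rho\, d$ in polar form, with norm $\rho = \|u\| \in [u_{min}, u_{max}]$ and direction $d \in \mathcal{S}^2$ subject to the pointing constraint $\sca{e_z}{d} \geq \cos(\theta)$, so that $f(\rho d) = \rho\bigl(\frac{T}{m}\sca{p_v}{d} - p_m q\bigr)$. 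Since $\frac{T}{m} > 0$ and $\rho \geq 0$, the optimal direction can be found first, independently of $\rho$, by maximizing the linear functional $d \mapsto \sca{p_v}{d}$ over the spherical cap $D = \{ d \in \mathcal{S}^2 : \sca{e_z}{d} \geq \cos(\theta) \}$.

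For the direction I would treat the three geometric configurations separately. If the unconstrained maximizer $p_v/\|p_v\|$ already meets the cap constraint, i.e.\ $p_{v_z} \geq \|p_v\|\cos(\theta)$ with $p_v \neq 0$, then Cauchy--Schwarz shows that $d = p_v/\|p_v\|$ maximizes $\sca{p_v}{d}$ over all of $\mathcal{S}^2$, hence over $D$; this is the first case. Otherwise I would argue that the maximizer lies on the boundary circle $\{d_z = \cos(\theta)\}$: at any candidate in the relative interior $\{d_z > \cos(\theta)\}$ only the sphere constraint is active, so the Lagrange condition forces $d = \pm p_v/\|p_v\|$, and the maximizing sign $+p_v/\|p_v\|$ is excluded because its $z$-component is $< \cos(\theta)$. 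On the boundary circle I parametrize $d = (\sin(\theta)\,\omega, \cos(\theta))$ with $\omega \in \mathcal{S}^1$, reducing the problem to maximizing $\sin(\theta)\,\sca{\overline{p}_v}{\omega}$; this gives $\omega = \overline{p}_v/\|\overline{p}_v\|$ when $\overline{p}_v \neq 0$ (second case) and an arbitrary $\omega = \delta \in \mathcal{S}^1$ when $\overline{p}_v = 0$ (third case).

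With the optimal direction $d(t)$ in hand, the remaining maximization of $f(\rho\, d(t)) = \rho\,\Psi(t)$ over $\rho \in [u_{min}, u_{max}]$ is affine in $\rho$, so the maximizer is $\rho = u_{max}$ when $\Psi(t) > 0$ and $\rho = u_{min}$ when $\Psi(t) < 0$, which is exactly the stated norm dichotomy (the value $\Psi(t) = 0$ being the undetermined, singular, case). For measurability I would simply set $d(t) = u(t)/\|u(t)\|$ wherever $u(t) \neq 0$: this is measurable since $u$ is an admissible control and $p_v$ is absolutely continuous, and by the maximization condition it must agree a.e.\ with a maximizer, hence take one of the three forms above; in the non-unique third case any fixed measurable choice of $\delta$ works.

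The main obstacle is the boundary step in the second and third cases, namely rigorously ruling out interior maximizers when $p_{v_z} < \|p_v\|\cos(\theta)$; the Lagrange argument above is the cleanest route. One must additionally check the degenerate situations to confirm that the case distinction is exhaustive and the formulas remain valid: when $p_v = 0$ the direction is immaterial since $\Psi = -p_m q$, and when $\theta = 0$ the cap collapses to $\{e_z\}$, in which case all three formulas consistently return $e_z$.
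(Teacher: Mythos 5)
Your proposal is correct and follows essentially the same route as the paper: reduce the maximization condition to the control-dependent part of the Hamiltonian, write $u=\alpha d$ in polar form, decouple the direction subproblem (maximizing $\sca{p_v}{d}$ over the spherical cap) from the affine maximization in the norm, which yields the $\Psi$-dichotomy. The only cosmetic difference is in the direction subproblem, where the paper invokes the KKT conditions (Lemma~\ref{le:kkt} in the Appendix) while you rule out interior maximizers by a Lagrange argument and then parametrize the boundary circle directly; both yield the same three-case formula.
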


\begin{proof}[Proof of Lemma~\ref{lem: sol_u}]
If $u(t)$ is an optimal control on $[0,t_f]$, then the maximization condition of the Pontryagin Principle implies that, for almost all $ t \in [0,t_f]$, $u(t)$ maximizes
\[ \varphi (t,w) =    \langle p_v, \frac{T}{m} w  \rangle - p_m q \Vert w \Vert\]
among the $w \in \U$.
Making the change of variable $w = \alpha d$, with $\alpha =\Vert w \Vert$ and $d \in \mathcal{S} ^2$, to find $u(t)\in \U$ maximizing $\varphi$ amounts to find $\alpha$ and $d$ maximizing
\[ \varphi (t,w) = \alpha \left( \frac{T}{m} \langle  p_v,  d  \rangle - p_m q \right)\]
under the conditions $ u_{min} \leq \alpha \leq u_{max} $ and $ d \in \mathcal{D} = \lbrace d \in \mathcal{S}^2: \langle e_z , d \rangle \geq \cos(\theta) \rbrace.$
Let us write 
\[ 
\varphi(t,w) = \alpha \psi(t,d), \quad \hbox{where} \quad \psi(t,d) =   \frac{T}{m}\langle  p_v,  d  \rangle - p_m q .
\]
First,  $\alpha \geq 0 $ implies that\[ \max \varphi = \max \limits_\alpha \left[ \alpha ( \max \limits_d \psi ) \right].\]
Then, the maximum of $\varphi$ is attained for $\alpha$ satisfying
\[
\begin{cases}
\alpha=u_{max}\, &\text{if}\, \max(\psi) > 0,\\
\alpha=u_{min} \,&\text{if}\, \max(\psi) < 0,
\end{cases}
\]
and for $d$ solution of the problem
\[\max \limits_d \psi =  -p_mq + \frac{T}{m} \max \limits_{d \in \mathcal{D}} \langle p_v, d \rangle. \]
It is a matter of an exercise to check that the solution $d$ of this maximization problem satisfies the statement of the lemma (for the sake of completeness we give the proof in Appendix, in Lemma~\ref{le:kkt}). Lemma~\ref{lem: sol_u} follows by setting $\Psi=\max \limits_d \psi$. 
\end{proof}

Thus, the norm of the control is mainly determined by the sign of the switching function $\Psi$ and takes only the values $u_{min}$ and $u_{max}$ when $\Psi$ is non zero (\emph{bang arcs}). Therefore, we need to analyze the variations of $\Psi$, which we will do now. Note however that, on time intervals where $\Psi$ is zero, we have \emph{singular arcs}. In that case the norm of $u$ is not given anymore by Lemma~\ref{lem: sol_u}, a further study is necessary, see Section~\ref{se:singular_traj}.

\begin{rem}
\label{re:dcontinuous}
The direction $d$ of the control is more regular than just measurable. Indeed let $I \subset [0, t_f]$ be the closed set such that, for every $t \in I$,
\begin{equation}\label{eq:non_hyp}
p_v(t) = 0 \qquad \hbox{or} \qquad  p_{v_z} (t) < 0 \ \hbox{ and } \ \overline p_v(t) = 0.
\end{equation}
Equivalently, for every
$t \in [0,t_f]\backslash I$, there holds
\begin{equation}\label{eq:hyp}
p_v(t) \neq 0 \qquad \hbox{and} \qquad \hbox{if } p_{v_z} (t) < \| \overline p_v(t) \| \cos \theta, \ \hbox{then } \ \overline p_v(t) \neq 0,
\end{equation}
which implies that for these times $d(t)$ is defined as a function of $p_v(t)$. This function being locally Lipschitz, the function $d(\cdot)$ is absolutely continuous on every subinterval of $[0,t_f]\backslash I$. In particular $d(\cdot)$ is continuous on $[0,t_f]\backslash I$ while it may be discontinuous on $I$.
\end{rem}

\subsection{Variations of $\Psi$}

\begin{prop}
\label{le:prop}
Consider an optimal trajectory on $[0,t_f]$. Then:
\begin{enumerate}
\item $\Psi$ is absolutely continuous on $[0, t_f]$;
  \item at a.e.\ $t \in [0,t_f]$, the time derivative of $\Psi$ is
  \begin{equation}\label{eq:dPsi}
    \dot{\Psi}(t) = - \frac{T}{m(t)} \langle q_r(t), d (t) \rangle;
  \end{equation}
  \item $t \mapsto \langle q_r(t), d(t) \rangle$ is a nonincreasing function on a full measurement subset of $[0,t_f)$.
\end{enumerate}
\end{prop}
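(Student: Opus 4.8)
The plan is to reduce everything to the scalar value function $M(p_v)=\max_{d\in\mathcal D}\sca{p_v}{d}$, since the proof of Lemma~\ref{lem: sol_u} shows that $\Psi(t)=\frac{T}{m(t)}M(p_v(t))-q\,p_m(t)$ with $\sca{p_v(t)}{d(t)}=M(p_v(t))$. For item (1), I would observe that $M$ is convex and $1$-Lipschitz (its maximizers lie on $\mathcal S^2$), so $M(p_v(\cdot))$ is absolutely continuous because $p_v(\cdot)$ is. Since $m(\cdot)$ is absolutely continuous and bounded below by $m_e>0$, the map $1/m(\cdot)$ is absolutely continuous and bounded, and $p_m(\cdot)$ is absolutely continuous; as sums and products of bounded absolutely continuous functions on the compact interval $[0,t_f]$ are absolutely continuous, $\Psi$ is absolutely continuous.

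For item (2) I would differentiate $\Psi=\frac{T}{m}\sca{p_v}{d}-q\,p_m$ at a.e.\ $t$. The key identity is $\sca{p_v}{\dot d}=0$ on $[0,t_f]\setminus I$: in the first regime $d=p_v/\|p_v\|$ and in the second $\overline d=\sin\theta\,\overline p_v/\|\overline p_v\|$ with $d_z\equiv\cos\theta$, so in both cases $d$ is parallel (in the relevant components) to $p_v$ while $\|d\|\equiv1$, giving $\sca{p_v}{\dot d}\propto\frac{d}{dt}\|d\|^2=0$ (this is the envelope/Danskin property of the maximizer). Hence $\frac{d}{dt}\sca{p_v}{d}=\sca{\dot p_v}{d}=-\sca{q_r}{d}$, using $\dot p_v=-q_r$. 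Writing $s=\sca{p_v}{d}$, the remaining terms collect as $-\frac{T s\,\dot m}{m^2}-q\dot p_m$; substituting $\dot m=-q\|u\|$, $\dot p_m=\frac{T}{m^2}\sca{p_v}{u}=\frac{T}{m^2}\|u\|\,s$ and $u=\|u\|d$, these two terms cancel exactly, leaving $\dot\Psi=-\frac{T}{m}\sca{q_r}{d}$, which is \eqref{eq:dPsi}.

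For item (3), on the absolutely continuous part of $[0,t_f]\setminus I$ I would split $\frac{d}{dt}\sca{q_r}{d}=\sca{\dot q_r}{d}+\sca{q_r}{\dot d}$ and argue each term is $\le0$. For the second term I use $q_r=-\dot p_v$ and Cauchy--Schwarz: in the first regime $\sca{q_r}{\dot d}=-\sca{\dot p_v}{\dot d}=-\frac{1}{\|p_v\|}\big(\|\dot p_v\|^2-\sca{p_v}{\dot p_v}^2/\|p_v\|^2\big)\le0$, and the analogous computation on the horizontal components gives $\sca{q_r}{\dot d}\le0$ in the second regime (there $d_z$ is constant). For the first term, from \eqref{eq:def_q} and $\dot p_r=0$ the only variation of $q_r$ comes from the measure, $\dot q_r=-n\,\rho$ with $\rho\ge0$ the density of the absolutely continuous part of $\mu$ on that part and $\Delta q_r=-n\,\mu(\{t\})$ at an atom; since $\mu\ge0$, both contributions to $\sca{q_r}{d}$ have the sign of $-\sca{n}{d}$. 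Here I would use the cone geometry: $d\in\mathcal D$ forces $d_z\ge\cos\theta$, so $\sca{n}{d}=d_z-\tan\gamma\,\sca{\overline r}{\overline d}/\|\overline r\|\ge d_z-\tan\gamma\sqrt{1-d_z^2}\ge\cos\theta-\tan\gamma\sin\theta\ge0$, the last inequality holding because $n$ makes angle $\gamma$ with $e_z$ while $d$ stays within $\theta$ of $e_z$ (equivalently $\gamma+\theta\le\frac\pi2$). Thus $\sca{\dot q_r}{d}\le0$ and every atomic jump of $\sca{q_r}{d}$ is nonpositive.

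Assembling the three signs shows that, off the exceptional null set $I$ of Remark~\ref{re:dcontinuous}, the distributional derivative of $t\mapsto\sca{q_r(t)}{d(t)}$ is a nonpositive measure, so this map coincides a.e.\ with a nonincreasing function, which is the meaning of item (3). The main obstacle is item (3): one must combine the bounded-variation nature of $q_r$ (so that $\sca{q_r}{d}$ is handled through a Leibniz rule for a $BV$ function times a piecewise-absolutely-continuous one) with the possible jumps of $d$ on $I$, which is exactly why the statement is restricted to a full-measure subset; controlling those jumps together with the geometric sign $\sca{n}{d}\ge0$ are the delicate points, whereas items (1)--(2) are essentially the envelope property plus the cancellation coming from the adjoint dynamics.
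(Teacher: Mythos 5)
Your items (1) and (2) follow the paper's own route: the Lipschitz/envelope property of $\sca{p_v}{d}$ gives absolute continuity, and the cancellation between $\frac{d}{dt}\bigl(\frac{T}{m}\bigr)\sca{p_v}{d}$ and $-q\dot p_m$ (via $\dot m=-q\|u\|$ and $\dot p_m=\frac{T}{m^2}\|u\|\sca{p_v}{d}$) is exactly how the paper reduces $\dot\Psi$ to $\frac{T}{m}\frac{d}{dt}\sca{p_v}{d}$. One caveat: your identity $\frac{d}{dt}\sca{p_v}{d}=-\sca{q_r}{d}$ is only derived on $[0,t_f]\setminus I$, but the set $I$ of Remark~\ref{re:dcontinuous} is a closed set that need \emph{not} be Lebesgue-null (you call it a ``null set'', which is a mischaracterization), so item (2), which is claimed a.e.\ on all of $[0,t_f]$, is not yet proved. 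The paper closes this by noting that a.e.\ on $I_1=\{p_v=0\}$ one has $\dot p_v=-q_r=0$ and $\frac{d}{dt}\sca{p_v}{d}=0$ (the derivative of an absolutely continuous function vanishes a.e.\ on its zero set), and similarly on $I_2$.

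The genuine gap is in item (3), in two places. First, your sign for the measure contribution $\sca{dq_r}{d}=-\sca{n}{d}\,d\mu$ rests on the purely geometric bound $\sca{n}{d}\ge\cos\theta-\tan\gamma\,\sin\theta\ge 0$, and that last inequality is equivalent to $\gamma+\theta\le\frac{\pi}{2}$ --- an assumption made nowhere in the problem ($\gamma$ and $\theta$ are only assumed to lie in $[0,\frac{\pi}{2})$ separately, so e.g.\ $\gamma=80^\circ$, $\theta=45^\circ$ defeats the argument). The paper's Lemma~\ref{le:signe_nd} obtains $\sca{n(t_0)}{d(t_0)}>0$ at contact points by a \emph{dynamical} argument instead: $t_0$ is a minimum of $h$, so $\dot h[t_0]=0$ and $\int_{t_0}^{t^k}\ddot h\,dt\ge 0$ along a sequence $t^k\downarrow t_0$; combined with $\ddot h=\frac{T}{m}\|u\|\sca{n}{d}-g_0+\sca{\dot n}{v}$ and $\sca{\dot n}{v}\le 0$, the gravity term $-g_0<0$ forces $\sca{n}{d}>0$. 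This step cannot be replaced by cone geometry in general, and it is the key lemma you are missing. Second, you explicitly defer ``controlling the jumps of $d$ on $I$'' as a delicate point, but that is precisely half of the paper's proof of Point~3: around a point $t_0$ with $\overline p_v(t_0)=0$ and $p_{v_z}(t_0)<0$ (or $p_v(t_0)=0$) it establishes local monotonicity of $\sca{q_r}{d}$ directly, using that $q_z$ is nonincreasing, that $q_x,q_y$ have locally constant sign, and the representation $\overline p_v(t)=-\int_{t_0}^{t}\overline q_r(s)\,ds$, which pins down the sign of $\sca{\overline q_r}{\overline d}$ on either side of $t_0$. Without both of these pieces, item (3) is not proved.
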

\begin{proof}[Proof of Points 1.\ and {2.}]
Consider an optimal trajectory on $[0,t_f]$ with adjoint vector $P = (p_r, p_v, p_m)$.
Recall that the switching function $\Psi$ is given by
\begin{equation}
\Psi(t) = \frac{T}{m}\langle  p_v(t),  d(t)  \rangle - p_m(t) q,
\end{equation}
the fonction $d(t)$ being defined in Lemma~\ref{lem: sol_u}. Note first that, for every  $t \in [0, t_f]$, there holds
\[ \sca{p_v}{d}(t) =
\begin{cases}
 \| p_v(t) \|  &\text{if}\, p_{v_z}(t) \geq \cos(\theta) \| p_v(t) \|,\\
 \| \overline p_v(t) \|\sin(\theta) + p_{v_z} \cos(\theta) &\text{otherwise}.
\end{cases}
\]
Thus $\sca{p_v}{d}$ is a Lipschitz function of $p_v$. The latter being an absolutely continuous function on $[0,t_f]$, $\sca{p_v}{d}$ is absolutely continuous too, which proves Point \emph{1.}

Moreover,  a simple computation shows
\[
\dot{\Psi} = \frac{T}{m} \frac{ d\langle p_v, d \rangle}{dt} \quad \hbox{a.e.\ on } [0,t_f],
\]
so we are left with the computation of the time derivative of $\sca{p_v}{d}$.

Let us introduce the subsets $I_1 = \lbrace t \in [0,t_f] \ : \ p_v(t) = 0 \rbrace$,  $I_2 = \lbrace t \in [0,t_f] \ : \ \overline p_v(t) = 0 \ \text{and} \ p_{v_z}(t) <0 \rbrace$, and $I = I_1 \cup I_2$. As noticed in Remark~\ref{re:dcontinuous}, the function $d(\cdot)$ is absolutely continuous, and so is differentiable a.e., on $[0, t_f] \backslash I$. Hence on this subset we can compute directly the time derivative of $\sca{p_v}{d}$ as follows:
\begin{itemize}
  \item when the pointing constraint is not active, there holds $\langle p_v, d \rangle = \Vert p_v \Vert$ and then
\[\frac{
d\langle p_v, d \rangle}{dt} = \frac{ d \Vert p_v \Vert}{dt} = - \frac{  \langle q_r, p_v \rangle}{ \Vert p_v\Vert} = - \langle q_r, d \rangle.
\]

  \item when the pointing constraint is active, $\langle p_v,d \rangle = \sin(\theta) \Vert \overline{p}_v \Vert  + \cos(\theta)p_{v_z}$
and we obtain
\begin{equation}\label{}
   \frac{ d \langle p_v,d \rangle}{dt} = \sin(\theta) \left(\frac{- \langle \overline q_r ,\overline p_v \rangle}{\Vert \overline p_v \Vert} \right) -\cos(\theta)q_{z} = -\langle q_r,d \rangle.
\end{equation}
\end{itemize}
As a consequence, \eqref{eq:dPsi} holds a.e.\ on $[0, t_f] \backslash I$.

Now, since $p_v=0$ and $\langle p_v, d \rangle=0$ on $I_1$, then $\dot p_v = - q_r = 0$ and $\frac{d\langle p_v, d \rangle}{dt}=0$ a.e.\ on $I_1$ (see for instance~\cite[Lemma~3.10]{chi06}), which implies that \eqref{eq:dPsi} holds a.e.\ on $I_1$. The same argument on $\overline p_v $ instead of $p_v$ shows that \eqref{eq:dPsi} holds a.e.\ on $I_2$ too, and so on $I$. This concludes the proof of Point \emph{2}.
\end{proof}

\begin{lem}
\label{le:hypo2}
Consider a subinterval $(t_1, t_2)$ of $[0, t_f] \backslash I$.
Then, $t \mapsto \langle q_r(t), d(t) \rangle$ is a nonincreasing function on $(t_1, t_2)$.
\end{lem}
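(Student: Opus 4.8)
The plan is to show that the distributional derivative of $t\mapsto\sca{q_r}{d}$ is a nonpositive measure on $(t_1,t_2)$, which immediately gives that this (left-continuous) function is nonincreasing there. On $(t_1,t_2)\subset[0,t_f]\setminus I$ the direction $d=d(p_v)$ is a locally Lipschitz function of $p_v$ by Remark~\ref{re:dcontinuous}, hence absolutely continuous; moreover only the first two regimes of Lemma~\ref{lem: sol_u} occur there, since $\overline p_v=0$ with $p_{v_z}<\Vert p_v\Vert\cos\theta$ would force $t\in I$. On the other hand, since $\dot p_r=0$, the definition~\eqref{eq:def_q} shows that $q_r$ is of bounded variation with derivative measure $dq_r=-n\,d\mu$, its discontinuities occurring only at the atoms of $\mu$.

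Because $d$ is continuous while $q_r$ is of bounded variation, the product rule for such functions (the jump term vanishes by continuity of $d$) gives the measure identity
\begin{equation}
d\sca{q_r}{d} = \sca{q_r}{\dot d}\,dt - \sca{n}{d}\,d\mu .
\end{equation}
I would derive this by writing $q_r=p_r-\int_{[0,\cdot)}n\,d\mu$ and differentiating each factor. It then suffices to show that each of the two measures on the right-hand side is nonpositive.

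For the absolutely continuous part I use $\dot p_v=-q_r$ together with the chain rule $\dot d=Dd(p_v)\,\dot p_v$ (valid a.e., as $d$ is Lipschitz in $p_v$), which yields $\sca{q_r}{\dot d}=-\dot p_v^\top Dd(p_v)\,\dot p_v$. Hence this term is nonpositive as soon as $Dd(p_v)$ is positive semidefinite, and a direct computation confirms this in both regimes: when the pointing constraint is inactive, $d=p_v/\Vert p_v\Vert$ and $Dd(p_v)=\Vert p_v\Vert^{-1}(I_3-\Vert p_v\Vert^{-2}p_v p_v^\top)$ is a nonnegative multiple of the orthogonal projection onto $p_v^\perp$; when it is active, $d$ depends on $p_v$ only through $\overline p_v/\Vert\overline p_v\Vert$, so $Dd(p_v)$ is block-diagonal with top block $\sin\theta\,\Vert\overline p_v\Vert^{-1}(I_2-\Vert\overline p_v\Vert^{-2}\overline p_v\overline p_v^\top)$ and zeros elsewhere. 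Both matrices are positive semidefinite, so $\sca{q_r}{\dot d}\le 0$ a.e.\ on $(t_1,t_2)$.

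It remains to control the singular part, i.e.\ to prove $\sca{n}{d}\ge 0$ on $\mathrm{supp}\,\mu$, and I expect this to be the main obstacle, since it is the only place where the state constraint genuinely enters and where $\mu$ may carry atoms. The idea is geometric: on $\mathrm{supp}\,\mu$ one has $h(r)=0$ while $h(r(\cdot))\ge 0$, so $t$ minimizes $h\circ r$, giving the first-order condition $\sca{n}{v}=0$ and, at second order, $\tfrac{d^2}{dt^2}h(r)\ge 0$. Expanding $\tfrac{d^2}{dt^2}h(r)=\sca{\dot n}{v}+\tfrac{T}{m}\Vert u\Vert\sca{n}{d}-g_0$ and using $\sca{n}{g}=g_0>0$ together with $\sca{\dot n}{v}=-\tan\gamma\,\Vert\overline r\Vert^{-1}\Vert\overline v_\perp\Vert^2\le 0$ (where $\overline v_\perp$ is the component of $\overline v$ orthogonal to $\overline r$), one obtains $\tfrac{T}{m}\Vert u\Vert\sca{n}{d}\ge g_0>0$, forcing $\Vert u\Vert>0$ and $\sca{n}{d}>0$ there. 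Intuitively, gravity has a positive component along the inward normal $n$, so keeping $h\ge 0$ requires the thrust to point with a positive component along $n$. The delicate point I would have to make rigorous is this second-order argument at atoms of $\mu$, where $h\circ r$ need not be twice differentiable; I would treat it through the a.e.\ second derivative together with the sign of the continuous function $t\mapsto\sca{n}{v}$ near such times. Granting $\sca{n}{d}\ge 0$ and $\mu\ge 0$, the singular term $-\sca{n}{d}\,d\mu$ is nonpositive; both measures in the identity are then nonpositive, and $\sca{q_r}{d}$ is nonincreasing on $(t_1,t_2)$.
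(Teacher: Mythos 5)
Your proof is correct and rests on the same two pillars as the paper's --- the nonnegativity of $\sca{\dot p_v}{\dot d}$ (your positive semidefiniteness of $Dd(p_v)$ is exactly Lemma~\ref{le:signPrd} and Corollary~\ref{le:coro_free_arcs}) and the sign condition $\sca{n}{d}>0$ at contact times (Lemma~\ref{le:signe_nd}) --- but you assemble them differently. The paper proves \emph{local} monotonicity: it expands the increment $\sca{q_r(t)}{d(t)}-\sca{q_r(t_0)}{d(t_0)}=I_1+I_2-I_3$ as in \eqref{eq:I123} and absorbs the cross term $I_2=O(t-t_0)\int\mu$ into $I_3\geq \tfrac12\sca{n(t_0)}{d(t_0)}\int\mu$ for $t-t_0$ small. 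You instead invoke the Stieltjes product rule for a continuous AC factor against a BV factor to identify the distributional derivative of $\sca{q_r}{d}$ as the sum of two nonpositive measures, $\sca{q_r}{\dot d}\,dt-\sca{n}{d}\,d\mu$; a Fubini computation shows your singular term equals the paper's $I_2-I_3$, so the two decompositions are literally equivalent. What your route buys is the elimination of the ``for $t$ close enough to $t_0$'' localization: you only need $\sca{n}{d}\geq 0$ pointwise on $\mathrm{supp}\,\mu$, at the price of citing the BV product rule, which the paper avoids. The one point you flag as delicate --- the second-order condition at atoms of $\mu$ --- is handled exactly as you propose and as the paper does: since $\dot h[t_0]=0$ and there is a sequence $t^k\downarrow t_0$ with $\dot h[t^k]\geq 0$, integrating $\ddot h$ over $[t_0,t^k]$, dividing by $t^k-t_0$ and using continuity of $\sca{n}{d}$ on $(t_1,t_2)$ gives $\sca{n(t_0)}{d(t_0)}\geq m_e g_0/(Tu_{max})>0$. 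The only detail you should add is the verification that $\overline r(t_0)\neq 0$ when $\gamma>0$ (so that $n$ is defined and $h\circ r$ is $C^1$ near $t_0$); the paper rules this out by showing $\overline r(t_0)=0$ together with $h(r(t_0))=0$ would force $v(t_0)=0$ before the final time.
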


In order to prove Lemma \ref{le:hypo2}, we will show that $\langle q_r(t),d (t) \rangle$ is locally nonincreasing, i.e., for any $t_0 \in [0,t_f)$ and any $t>t_0$ close enough, there holds $\langle q_r(t),d (t) \rangle \leq \langle q_r(t_0),d (t_0) \rangle$. We first prove two preliminary lemmas.

\begin{lem}
\label{le:signPrd}
For a.e.\ $t\in (t_1, t_2) \subset [0, t_f] \backslash I$, there holds
\begin{equation}\label{}
  \langle \dot p_v(t),\dot d (t) \rangle \geq 0.
\end{equation}
\end{lem}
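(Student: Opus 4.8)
The plan is to exploit that, away from $I$, the direction $d$ is a locally Lipschitz function of $p_v$ (Remark~\ref{re:dcontinuous}), so along the absolutely continuous curve $t\mapsto p_v(t)$ it is itself absolutely continuous and differentiable a.e.\ on $(t_1,t_2)$. Using the adjoint dynamics $\dot p_v=-q_r$, the quantity $\langle\dot p_v,\dot d\rangle$ can be computed explicitly by differentiating the two defining formulas for $d$ in Lemma~\ref{lem: sol_u}, and in each case it will be nonnegative by the Cauchy--Schwarz inequality. I would treat the two regimes separately and then deal with the interface between them.

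First, on the open set where the pointing constraint is inactive we have $d=p_v/\|p_v\|$. Differentiating,
\[
\dot d=\frac{1}{\|p_v\|}\Big(\dot p_v-\langle d,\dot p_v\rangle\,d\Big),
\]
i.e.\ $\|p_v\|\dot d$ is the orthogonal projection of $\dot p_v$ onto $d^{\perp}$. Hence
\[
\langle\dot p_v,\dot d\rangle=\frac{1}{\|p_v\|}\Big(\|\dot p_v\|^2-\langle d,\dot p_v\rangle^2\Big)\ge 0,
\]
the inequality being Cauchy--Schwarz together with $\|d\|=1$. On the open set where the constraint is active, $d=\big(\sin\theta\,\overline p_v/\|\overline p_v\|,\cos\theta\big)$ has a constant $z$-component, so $\dot d=(\dot{\overline d},0)$ with $\overline d=\sin\theta\,\overline p_v/\|\overline p_v\|$. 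Consequently $\langle\dot p_v,\dot d\rangle=\langle\dot{\overline p}_v,\dot{\overline d}\rangle$, and the same projection computation, now carried out in $\R^2$, gives
\[
\langle\dot p_v,\dot d\rangle=\frac{\sin\theta}{\|\overline p_v\|}\Big(\|\dot{\overline p}_v\|^2-\big\langle \tfrac{\overline p_v}{\|\overline p_v\|},\dot{\overline p}_v\big\rangle^2\Big)\ge 0,
\]
again by Cauchy--Schwarz in $\R^2$.

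The main obstacle is the interface set $A=\{t:\ p_{v_z}(t)=\|p_v(t)\|\cos\theta\}$ separating the two regimes, where the above case-wise computation of $\dot d$ is not immediately justified. On $A$ the two formulas for $d$ coincide (one checks $\|\overline p_v\|=\|p_v\|\sin\theta$ there), so $d$ stays continuous across the interface; where $A$ carries positive measure one may invoke the a.e.\ differentiability of absolutely continuous functions along level sets to recover the inequality. A cleaner way to bypass the case analysis altogether is to note that $\langle p_v,d\rangle=\max_{d'\in\mathcal D}\langle p_v,d'\rangle$ is the support function $g(p_v)$ of (the convex hull of) $\mathcal D$, hence convex, and that $d(t)=\nabla g(p_v(t))$ precisely on $[0,t_f]\setminus I$, where the maximizer is unique. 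Monotonicity of the gradient of a convex function then yields $\langle d(t)-d(s),\,p_v(t)-p_v(s)\rangle\ge 0$ for all nearby $s,t$; dividing by $(t-s)^2>0$ and letting $s\to t$ at a point of differentiability gives $\langle\dot d(t),\dot p_v(t)\rangle\ge 0$ directly, uniformly in the two regimes.
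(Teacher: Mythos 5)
Your core argument is the paper's proof almost verbatim: on each regime you differentiate the explicit formula for $d$ from Lemma~\ref{lem: sol_u}, identify $\|p_v\|\,\dot d$ (resp.\ $\|\overline p_v\|\,\dot{\overline d}/\sin\theta$) as the orthogonal projection of $\dot p_v$ (resp.\ $\dot{\overline p}_v$) onto $d^\perp$, and conclude by Cauchy--Schwarz; the paper writes exactly the same two identities and the same conclusion. Where you add value is twofold. First, you flag the interface set $A=\{p_{v_z}=\|p_v\|\cos\theta\}$, which the paper passes over silently; the gap is real but minor, and is closed by the standard fact (invoked explicitly elsewhere in the paper, in the proof of Point~2 of Proposition~\ref{le:prop}) that an absolutely continuous function coinciding with a $C^1$ expression of $p_v$ on a closed set has the corresponding derivative a.e.\ on that set --- since $(t_1,t_2)$ is covered by the two closed sets $\{p_{v_z}\geq\|p_v\|\cos\theta\}$ and $\{p_{v_z}\leq\|p_v\|\cos\theta\}$, one of the two formulas holds at a.e.\ $t$, as claimed. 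Second, your support-function argument is a genuinely different and cleaner route: $\langle p_v,d\rangle=g(p_v)$ with $g$ the (convex) support function of $\mathcal D$, $d(t)$ is the unique maximizer, hence the gradient $\nabla g(p_v(t))$, precisely off $I$, and monotonicity of the subdifferential gives $\langle d(t)-d(s),\,p_v(t)-p_v(s)\rangle\geq 0$ for all $s,t$; dividing by $(t-s)^2$ and letting $s\to t$ at a common point of differentiability yields the inequality with no case analysis and no interface issue. That version also exposes the statement as a pure convexity fact, valid for any convex admissible direction set, whereas the paper's computation is tied to the specific geometry of the spherical cap. Both arguments are correct.
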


\begin{proof}
Recall first (see Remark~\ref{re:dcontinuous}), that  $d(\cdot)$ is absolutely continuous on $(t_1, t_2)$ and that on such an interval,
\begin{equation}
  d(t) = \frac{ p(t)}{\Vert  p(t) \Vert} \quad \hbox{or} \quad d(t) = \left( \sin(\theta) \frac{\overline p_v(t)}{\Vert \overline p_v(t) \Vert},   \cos(\theta) \right).
\end{equation}
Thus, for a.e.\ $t\in (t_1, t_2)$, we have either
\begin{equation}\label{}
 \langle \dot p_v(t),\dot d (t) \rangle
= \frac{1}{\Vert  p_v(t)\Vert} \left(  \Vert \dot p_v(t) \Vert^2 - \frac{\langle \dot p_v(t),  p_v(t) \rangle^2}{\Vert  p_v(t) \Vert^2}\right),
\end{equation}
or,
\begin{equation}\label{}
\langle \dot p_v(t),\dot d (t) \rangle
= \frac{\sin(\theta)}{\Vert \overline{p}_v (t) \Vert} \left( \Vert  \dot{\overline{p}}_v(t)  \Vert^2 - \frac{ \langle \dot{\overline{p}}_v(t), \overline{p}_v (t) \rangle^2}{\Vert \overline{p}_v(t) \Vert^2} \right).
\end{equation}
In both case it is nonnegative due to Cauchy-Schwarz inequality.
\end{proof}

Note that moreover $\langle \dot p_v(t),\dot d (t) \rangle=0$ if and only if $p_v(t)$ and $\dot{p}_v(t)$ are collinear when $d(t) = \frac{ p(t)}{\Vert  p(t) \Vert}$, and if and only if $\overline p_v(t)$ and $\dot{\overline{p}}_v(t)$ are collinear otherwise.

\begin{coro}
\label{le:coro_free_arcs}
If the state constraint is not active (i.e.\ $h(r(t))\neq 0$) on $(t_1,t_2) \subset [0, t_f] \backslash I$, then $\langle q_r(t),d (t) \rangle$ is differentiable a.e.\ on $(t_1,t_2)$ and
\begin{equation}\label{}
\frac{d}{dt}\langle q_r(t),d (t) \rangle \leq 0.
\end{equation}
Moreover this derivative is zero if and only if $p_v(t)$ and $q_r$ are collinear when $d(t) = \frac{ p_v(t)}{\Vert  p_v(t) \Vert}$, and if and only if $\overline p_v(t)$ and $\overline q_r$ are collinear otherwise.
\end{coro}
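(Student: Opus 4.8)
The plan is to exploit the hypothesis that the state constraint is inactive on $(t_1,t_2)$ to show that $q_r$ is constant there, after which the corollary follows almost immediately from Lemma~\ref{le:signPrd} and the adjoint dynamics. First I would observe that since $h(r(t)) \neq 0$ on the open interval $(t_1,t_2)$, the support condition $\mathrm{supp}\{\mu\} \subset \{t : h(r(t))=0\}$ forces $\mu$ to carry no mass in $(t_1,t_2)$. Consequently, for $t \in (t_1,t_2)$ the integral $\int_{[0,t)} n(s)\,\mu(ds)$ does not vary with $t$, and since $\dot p_r = 0$ the function $p_r$ is also constant; by the definition \eqref{eq:def_q} of $q_r$, this gives that $q_r(t) \equiv q_r$ is constant on $(t_1,t_2)$.

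Next, recall from Remark~\ref{re:dcontinuous} that $d(\cdot)$ is absolutely continuous on $(t_1,t_2) \subset [0,t_f]\backslash I$; since $q_r$ is constant, $t \mapsto \langle q_r, d(t)\rangle$ is absolutely continuous and hence differentiable a.e. Differentiating and using the adjoint equation $\dot p_v = -q_r$, I would compute, for a.e.\ $t \in (t_1,t_2)$,
\[
\frac{d}{dt}\langle q_r, d(t)\rangle = \langle q_r, \dot d(t)\rangle = -\langle \dot p_v(t), \dot d(t)\rangle.
\]
By Lemma~\ref{le:signPrd} the right-hand side is nonpositive, which yields $\frac{d}{dt}\langle q_r(t), d(t)\rangle \leq 0$.

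For the equality characterization, I would invoke the remark following Lemma~\ref{le:signPrd}, according to which $\langle \dot p_v, \dot d\rangle = 0$ exactly when $p_v$ and $\dot p_v$ are collinear (in the case $d = p_v/\|p_v\|$) or when $\overline p_v$ and $\dot{\overline{p}}_v$ are collinear (otherwise). Substituting $\dot p_v = -q_r$ and $\dot{\overline{p}}_v = -\overline q_r$ translates these conditions into collinearity of $p_v$ and $q_r$, respectively of $\overline p_v$ and $\overline q_r$, which is exactly the stated condition for the derivative to vanish.

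The only substantive step is the constancy of $q_r$; I expect no real obstacle, as the result is essentially the assembly of the preceding lemma, the support property of $\mu$, and the adjoint dynamics. The one point requiring a little care is verifying that $\mu$ puts no mass on the open interval, so that $\int_{[0,t)} n\,d\mu$ is genuinely constant for $t$ ranging over $(t_1,t_2)$ rather than merely left-continuous.
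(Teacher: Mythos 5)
Your proof is correct and follows essentially the same route as the paper: the support condition on $\mu$ and $\dot p_r=0$ give that $q_r$ is constant on the constraint-free interval, so $\frac{d}{dt}\langle q_r,d\rangle=\langle q_r,\dot d\rangle=-\langle \dot p_v,\dot d\rangle\leq 0$ by Lemma~\ref{le:signPrd}, with the equality case read off from the remark following that lemma. You simply spell out the constancy of $q_r$ and the absolute continuity of $d$ in more detail than the paper does.
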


\begin{proof}
On $(t_1,t_2)$, the measure $\mu$ is zero, so that $q_r$ is constant and $\dot p_v(t)=- q_r$. Therefore, the sign of the derivative
\begin{equation}\label{}
\frac{d}{dt}\langle q_r(t),d (t) \rangle =  \langle q_r, \dot d(t) \rangle = - \langle \dot p_v(t), \dot d(t) \rangle
\end{equation}
is given by Lemma~\ref{le:signPrd}.
\end{proof}

\begin{lem}
\label{le:signe_nd}
Let $(t_1,t_2) \subset [0, t_f] \backslash I$ and let $t_0 \in (t_1, t_2)$ such that $h(r(t_0))=0$. Then
\begin{equation}\label{}
  \langle n(t_0),d(t_0) \rangle >0, \qquad \normalfont\text{where} \quad n(t)=\begin{pmatrix} - \tan(\gamma)\frac{\overline r (t)}{\Vert \overline r (t)\Vert } \\ 1 \end{pmatrix}.
\end{equation}
\end{lem}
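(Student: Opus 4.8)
The plan is to reduce the claim to an elementary trigonometric inequality relating the pointing half-angle $\theta$ and the glide-slope angle $\gamma$. Write the control direction as $d(t_0) = (\overline d, d_z)$, with horizontal part $\overline d \in \R^2$ and vertical component $d_z = \langle e_z, d(t_0) \rangle$. The only properties of $d$ that I will use are $\|d(t_0)\| = 1$ and the pointing constraint $d_z \geq \cos\theta$, which hold wherever the control is nonzero (in particular on $(t_1,t_2) \subset [0,t_f]\setminus I$, where $d$ is single-valued). Since $\theta \in [0,\pi/2)$ we have $d_z \geq \cos\theta > 0$ and hence $\|\overline d\| = \sqrt{1 - d_z^2}$. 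Note also that $h(r(t_0)) = 0$ together with $\overline r(t_0) = 0$ would force $r(t_0) = 0$, where $n(t_0)$ is undefined; I therefore assume $\overline r(t_0) \neq 0$, so that
\[
\langle n(t_0), d(t_0) \rangle = -\tan\gamma\, \frac{\langle \overline r(t_0), \overline d \rangle}{\|\overline r(t_0)\|} + d_z .
\]

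First I would bound the horizontal term from below. As $\tan\gamma \geq 0$, this expression is smallest when $\langle \overline r(t_0), \overline d\rangle$ is largest, and Cauchy--Schwarz gives $\langle \overline r(t_0), \overline d\rangle \leq \|\overline r(t_0)\|\,\|\overline d\|$. Writing $d_z = \cos\phi$ and $\|\overline d\| = \sin\phi$ with $\phi \in [0,\theta]$ (the bound $d_z \geq \cos\theta$ being exactly $\phi \leq \theta$), this yields
\[
\langle n(t_0), d(t_0) \rangle \;\geq\; d_z - \tan\gamma\,\|\overline d\| \;=\; \cos\phi - \tan\gamma\,\sin\phi .
\]

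Next I would minimise the right-hand side over $\phi \in [0,\theta]$. The function $g(\phi) = \cos\phi - \tan\gamma\,\sin\phi$ has derivative $g'(\phi) = -\sin\phi - \tan\gamma\,\cos\phi \leq 0$ on $[0,\pi/2)$, so $g$ is nonincreasing and attains its minimum at $\phi = \theta$. Hence $\langle n(t_0), d(t_0)\rangle \geq \cos\theta - \tan\gamma\,\sin\theta$, and dividing by $\cos\theta > 0$ this last quantity is positive precisely when $\tan\theta\,\tan\gamma < 1$, equivalently $\cos(\theta + \gamma) > 0$, i.e.\ $\theta + \gamma < \tfrac{\pi}{2}$. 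The conclusion then follows from this compatibility condition between the pointing cone and the glide-slope cone.

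The computation is routine; the one genuine point is recognising that the worst case occurs when $\overline d$ is aligned with $\overline r(t_0)$ and the pointing constraint is saturated ($\phi = \theta$), so that strict positivity hinges exactly on $\theta + \gamma < \pi/2$. I expect the main thing to verify is that this angle condition is indeed a standing hypothesis of the model (a thrust cone narrower than the complement of the glide-slope cone), together with the dismissal of the degenerate apex case $\overline r(t_0) = 0$ discussed above.
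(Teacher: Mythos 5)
Your argument is a purely static one: you bound $\langle n(t_0),d(t_0)\rangle$ from below over \emph{all} admissible directions $d$ in the pointing cone and all possible positions on the glide-slope boundary, and the resulting worst-case bound $\cos\theta-\tan\gamma\,\sin\theta$ is positive exactly when $\theta+\gamma<\tfrac{\pi}{2}$. The difficulty is that this angle condition is \emph{not} a hypothesis of the paper: the model only assumes $\theta\in[0,\tfrac{\pi}{2})$ and $\gamma\in[0,\tfrac{\pi}{2})$ separately, so for instance $\theta=\gamma=60^\circ$ is allowed, your lower bound becomes negative, and your proof gives nothing. You flagged this yourself as ``the main thing to verify,'' and indeed it fails to hold, so this is a genuine gap rather than a routine check. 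The underlying reason a worst-case bound over the control set cannot suffice is that the lemma is a statement about the \emph{optimal trajectory at a contact point}, not about an arbitrary admissible direction: the paper's proof is dynamical. It computes $\ddot h[t]=\frac{T}{m}\|u\|\langle n,d\rangle-g_0+\langle \dot n,v\rangle$, shows $\langle\dot n,v\rangle\le 0$ by Cauchy--Schwarz, and uses that $t_0$ is a minimum of $h\ge 0$ (so $\dot h[t_0]=0$ and $\dot h[t^k]\ge 0$ for some $t^k\downarrow t_0$) to force $\int_{t_0}^{t^k}\frac{T}{m}\|u\|\langle n,d\rangle\,dt\ge g_0(t^k-t_0)$; continuity of $\langle n,d\rangle$ then yields $\langle n(t_0),d(t_0)\rangle\ge \frac{m_e g_0}{Tu_{max}}>0$ with no restriction on $\theta+\gamma$. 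The strict positivity comes from gravity, not from cone geometry.

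A secondary issue: you dismiss the apex case $\overline r(t_0)=0$ by saying $n(t_0)$ would be undefined there, but that does not excuse the case --- the lemma must rule it out. The paper proves $\overline r(t_0)\neq 0$ by another dynamical argument (if $\overline r(t_0)=0$ then $r(t_0)=0$ and $v_z(t_0)=0$, and the first-order expansion of $h$ forces $\overline v(t_0)=0$, contradicting $t_0\neq t_f$). If you want to salvage a static approach, you would at minimum have to add $\theta+\gamma<\tfrac{\pi}{2}$ as an explicit standing assumption and supply the apex argument; as written, the proof does not establish the lemma under the paper's hypotheses.
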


\begin{proof}
Set $h[t]=h(r(t))$. Since $h(r)=r_z - \tan(\gamma) \Vert \overline{r} \Vert$, the function $t \mapsto h[t]$ is $C^1$ at times such that $\overline r(t) \neq 0$. Its derivative
\begin{equation}\label{}
\dot h[t]= \langle n(t),v(t) \rangle,
\end{equation}
is absolutely continuous and, for a.e.\ $t$,
\begin{equation}\label{eq:ddoth}
\ddot h[t]= \frac{T}{m(t)} \|u(t)\| \sca{n(t)}{d(t)}- g_0 + \langle \dot n(t),v(t) \rangle.
\end{equation}
Moreover, due to Cauchy-Schwarz inequality, we have
\begin{equation}\label{eq:nv}
 \langle \dot n(t),v(t) \rangle = \frac{\tan(\gamma)}{\Vert \overline{r} (t) \Vert} \left( - \Vert \overline{v}(t) \Vert^2 + \frac{ \langle \overline{r}(t), \overline{v} (t) \rangle^2}{\Vert \overline{r}(t) \Vert^2}\right) \leq 0.
\end{equation}
Note that $\overline r(t_0) \neq 0$. Indeed, otherwise $h(r(t_0))=0$ would imply $r(t_0) = 0$ and $v_z(t_0) = 0$, as $z$ has reached a minimum. Then, as $z(t) = o(t-t_0)$ and $ h(t) = - \tan(\gamma) | t- t_0 | \|\overline v(t_0) \| + o(t-t_0)$,  $h(t) \geq 0$ would imply that $\overline v(t_0) = 0$ and thus $v(t_0) =0$, a contradiction with $t_0 \neq t_f$. Thus $\dot h[t]$ is differentiable at $t_0$.

Since $h[t]\geq 0$ on $[0,t_f]$, $t_0$ is a minimum of $h[t]$, therefore $\dot h[t_0]=0$. Moreover, there exists a sequence of times $t^k>t_0$ converging to $t_0$ such that $\dot h[t^k]\geq 0$. Thus
\begin{equation}\label{}
  \int_{t_0}^{t^k} \ddot h[t] dt = \dot h[t^k] \geq 0.
\end{equation}
Using \eqref{eq:ddoth} and \eqref{eq:nv}, this implies
\begin{equation}\label{}
 \int_{t_0}^{t^k} \frac{T}{m(t)} \|u(t)\| \sca{n(t)}{d(t)} dt \geq g_0 (t^k-t_0).
\end{equation}
The function $d(t)$ being continuous (see Remark~\ref{re:dcontinuous}), $\sca{n(t)}{d(t)}$ is continuous too, and the above inequality implies that $\sca{n(t_0)}{d(t_0)}$ is  nonnegative and then
\begin{equation}\label{}
g_0  \leq \frac{1}{t^k-t_0}\int_{t_0}^{t^k} \frac{T}{m(t)} \|u(t)\| \sca{n(t)}{d(t)} dt \leq \frac{T}{m_e} u_{max}\max_{[t_0, t^k]}\sca{n(t)}{d(t)},
\end{equation}
and $ \max_{[t_0, t^k]}\sca{n(t)}{d(t_0)}$ tends toward $ \sca{n(t_0)}{d(t_0)}$ when $t^k$ converges to $t_0$,
which gives the result.
\end{proof}
\begin{rem}
The above inequality shows also that if $\|u(t)\|$ has a limit $\alpha$ when $t$ tends toward $t_0$, then the limit $\alpha$ satisfies $ T \alpha \geq m_e g_0 \cos(\gamma)$.
\end{rem}
\begin{proof}[Proof of Lemma~\ref{le:hypo2}]
Let $t_0 \in [0,t_f)$. If $h[t_0]\neq 0$, then $h[t]\neq 0$ in a neighborhood of $t_0$. It then results from Corollary~\ref{le:coro_free_arcs} that $t \mapsto \langle q_r(t),d (t) \rangle$ is nonincreasing near $t_0$.

We are left with the case $h[t_0] = 0$. Recall that, for $t>t_0$, we have from the maximum principle
\begin{equation}
q_r(t)=q_r(t_0) - \int_{[t_0,t)} n(s)  \mu (ds),
\end{equation}
and that the function $d(\cdot)$ is absolutely continuous on $(t_1,t_2)$, so that, for $t$ near $t_0$, $d(t)-d(t_0)=\int_{t_0}^t \dot d(s) ds$. 
Thus, for $t>t_0$ close enough to $t_0$ we have 
\begin{equation}\label{eq:I123}
\langle q_r(t),d (t) \rangle - \langle q_r(t_0),d (t_0) \rangle  =  I_1 + I_2 - I_3,
\end{equation}
where
\begin{equation} 
\begin{array}{c}
\displaystyle I_1 = \int_{t_0}^{t}\langle q_r(s), \dot d(s) \rangle ds, \quad I_2 = \int_{t_0}^{t} \left\langle \int_{[t_0,s)} n(\sigma)  \mu (d\sigma) , \dot d(s) \right\rangle ds, \\[3mm]
\displaystyle I_3 =  \int_{[t_0,t)} \langle n(s),d (t) \rangle  \mu (ds).
\end{array}
\end{equation}
First, by Lemma~\ref{le:signPrd} we have $I_1 \leq 0$. Second, $p_v(t)$ and $q_r(t)$ are  bounded near $t_0$, and so $\dot d(t)$ is bounded too. Since $\mu$ is a positive measure, we obtain
\begin{equation}
I_2 \leq (t-t_0) \mathit{Cst}  \int_{[t_0,t)} \mu (ds).
\end{equation}
Third, by Lemma~\ref{le:signe_nd} and the continuity of $\sca{n(t)}{d(t)}$, we obtain
\begin{equation}
I_3  \geq \frac{1}{2} \langle n(t_0),d (t_0) \rangle  \int_{[t_0,t)} \mu (ds).
\end{equation}
As a consequence, $I_2-I_3 \leq 0$ for $t-t_0$ small enough. Thus $\langle q_r(t),d (t) \rangle \leq \langle q_r(t_0),d (t_0) \rangle$ for $t>t_0$ close enough to $t_0$, which concludes the proof.
\end{proof}

\begin{proof}[Proof of Proposition~\ref{le:prop}-Point {3.}]
Let us show that for every $t_0 \in [0, t_f]$, $t \mapsto \sca{q_r}{d}$ is nonincreasing almost everywhere in the neighbourhood of $t_0$. If \eqref{eq:hyp} is true at $t_0$, the result is given by Lemma~\ref{le:hypo2}. Otherwise, let us consider the case when $\overline p_v(t_0) =0$ and $p_{v_z}(t_0) < 0$. The case when $p_v(t_0) = 0 $ can be treated in the same way.
Firstly, it should be noted that
\begin{enumerate}
    \item from $q_z(t) = p_z(t_0) - \int _{[0,t)} \mu(ds)$, $q_z$ is a nonincreasing function on $[0, t_f]$;
    \item from $\overline q_r(t) = \overline p_r(t_0) + \int _{[0,t)} \tan(\gamma) \frac{\overline r}{\| \overline r \|} \mu(ds)$, as $\overline r(t_0) \neq 0$ if $h(t_0) = 0$ and $q_r$ is constant if $h(t_0) \neq 0$, $q_x$ and $q_y$ are monotonous in a neighbourhood of $t_0$;
    \item $\overline p_v(t) = - \int _{t_0}^t \overline q_r(s) ds.$
\end{enumerate}
Recall that we denote by $\overline d$ the vector composed of the two first coordinates of $d$.
Then, in a neighbourhood $(t_-, t_+)$ of $t_0$, with $t_- \leq t_+$,\[ \sca{q_r}{d} = \sca{\overline q_r}{\overline d} + \cos(\theta)q_z.\] As $q_z$ is nonincreasing, let us show that $\sca{\overline q_r}{\overline d}$ is nonincreasing.
From 2., on each interval $J = (t_-, t_0)$ or  $(t_0, t_+)$,
\begin{itemize}
    \item either $\overline q_r = 0$ and $\sca{\overline q_r}{\overline d} = 0$ is constant;
    \item or $ \overline q_r(t) \neq 0 $ $\forall t \in J $, thus $\overline p_v(t) \neq 0 $ and $ \overline d = \sin(\theta) \frac{\overline p_v}{\| \overline p_v \|}$. In this case, \[ \sca{\overline q_r}{\overline d} = - \frac{\sin(\theta)}{\| \overline p_v(t) \|} \int _{t_0}^t \sca{\overline q_r(s) }{\overline q_r(t)} ds.
    \]
We can assume, by reducing the interval $J$ if necessary, that for every $t$ and $s$ in $J$, $\text{sign}(q_x(t)) = \text{sign}(q_x(s))$ and $\text{sign}(q_y(t)) = \text{sign}(q_y(s))$. It implies that $\sca{\overline q_r(s)}{\overline q_r(t)} \geq 0$, and then $\sca{\overline q_r}{\overline d}$ is nonincreasing on $J$ and is of the same sign as $(t_0 - t)$.
\end{itemize}
Combining these results, we obtain that $\sca{\overline q_r}{\overline d}$ is nonincreasing on $(t_-,t_0) \cup (t_0, t_+)$, which concludes the proof of the proposition.
\end{proof}

\begin{proof}[Proof of Theorem \ref{th:BOB}]
Consider an optimal trajectory on $[0,t_f]$. From Lemma \ref{lem: sol_u}, the norm of the control has the following expression depending on the sign of $\Psi$:
\begin{center}
$
\Vert u(t) \Vert =
\begin{cases}
u_{max}\, &\text{if}\; \Psi(t) > 0,\\
u_{min} \,&\text{if}\; \Psi(t) < 0 \\
\end{cases}
$
\end{center}
and it is singular if $\Psi(t) = 0$.

Let us show that $\Psi$ can change of sign at most two times or be constantly zero on at most one interval. From Proposition~\ref{le:prop}-Point \textit{2}, the sign of the derivative of $\Psi$ is the opposite of the one of $\sca{q_r}{d}$. Moreover, from Proposition~\ref{le:prop}-Point \textit{3}, $\sca{q_r}{d}$ is non increasing, thus it may be zero on at most one interval denoted $[\overline t_1, \overline t_2] \subset [0, t_f]$. Then, on $[0, \overline t_1]$ $\sca{q_r}{d} >0$ and $ \dot \Psi <0$, and on $[\overline t_2, t_f]$ $\sca{q_r}{d} <0$ and $ \dot \Psi >0$.
\begin{itemize}
\item Assume that $\Psi(t) \neq 0$ for $t \in [\overline t_1, \overline t_2]$. Then, on $[0, \overline t_1]$( respectively $[\overline t_2, t_f]$)  as $\Psi$ is continuous (from Proposition~\ref{le:prop}-Point \textit{1})  and decreasing (resp. increasing), it may be zero at most one time. Finally, on $[0, t_f]$, it may be zero and change of sign at most two times. If $\Psi(t) >0$ on  $[\overline t_1, \overline t_2]$, then it is strictly positive everywhere on $[0, t_f]$. If $\Psi(t) <0$ on  $[\overline t_1, \overline t_2]$, let us define $t_1 \in [0, \overline t_1)$ (resp. $t_2 \in (\overline t_2, t_f]$) such that $\Psi(t_1) = 0$ or $t_1 = 0$ (resp. $t_2 = t_f$). Then $\Psi(t) >0$ on $[0,t_1) \cup (t_2, t_f]$ and $\Psi(t) <0$ on $(t_1, t_2)$.

\item Assume now that $\Psi$ crosses zero on  $[\overline t_1, \overline t_2]$. As $\dot \Psi(t)$ is zero on that interval, then $\Psi(t) = 0$ for all $t \in [\overline t_1, \overline t_2]$. Moreover, as it is decreasing on $[0, \overline t_1]$ and increasing on $[\overline t_2, t_f]$, $\Psi(t)>0$ for all $t \in [0, \overline t_1) \cup (\overline t_2, t_f]$.
\end{itemize}
From this study on the sign of $\Psi$, we deduce the form of the control as stated in the theorem. 
\end{proof}

\subsection{Results on singular trajectories}
\label{se:singular_traj}
We have seen in Theorem~\ref{th:BOB} that optimal trajectories may contain singular arcs. The next lemmas show that such trajectories have a particular form, namely on singular arcs some adjoint vectors are collinear and the pointing constraint is active. We will show later (Lemma \ref{le:sing_ngen}) that in the case $\gamma = 0$, these properties happen only if the initial conditions are in very specific positions, so that singular arcs do not appear for generic initial conditions. We did not try to prove the result for $\gamma \neq 0$, but it is reasonable to believe that the results still holds (no singular arc for generic initial conditions).

\begin{lem}
\label{le:sing}
Consider an optimal trajectory on $[0,t_f]$ and an interval $(t_1',t_2') \subset [0,t_f] \backslash I$.
Then, if $\Psi(t)=0$ on $(t_1', t_2')$, then $\int_{[t_1',t_2')} \mu (ds)=0$. Moreover if $d(t) = \frac{ p_v(t)}{\Vert  p_v(t) \Vert}$, then $p_v(t)$ and $q_r(t)$ are collinear, otherwise  $\overline p_v(t)$ and $\overline q_r(t)$ are collinear.
\end{lem}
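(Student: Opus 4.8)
The plan is to exploit the rigidity forced by $\Psi \equiv 0$ by feeding it back into the two-part machinery already developed for Proposition~\ref{le:prop}. First I would observe that, since $\Psi$ is absolutely continuous (Proposition~\ref{le:prop}-Point~1) and vanishes on the whole interval $(t_1', t_2')$, its derivative is zero almost everywhere there; by the formula \eqref{eq:dPsi} of Point~2 this forces $\langle q_r(t), d(t)\rangle = 0$ for a.e.\ $t \in (t_1', t_2')$. Because $t\mapsto \langle q_r(t), d(t)\rangle$ is nonincreasing (Point~3), being zero almost everywhere upgrades to being identically zero on $(t_1', t_2')$.

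The heart of the argument is to reuse the decomposition \eqref{eq:I123} from the proof of Lemma~\ref{le:hypo2}. For $t_0 < t$ in $(t_1', t_2')$ we then have $0 = \langle q_r(t), d(t)\rangle - \langle q_r(t_0), d(t_0)\rangle = I_1 + I_2 - I_3$. I would recall that, since $\dot p_v = -q_r$ a.e., Lemma~\ref{le:signPrd} gives $I_1 = -\int_{t_0}^t \langle \dot p_v, \dot d\rangle\,ds \le 0$, while the estimates of Lemma~\ref{le:hypo2} give $I_2 - I_3 \le 0$ for $t$ close to $t_0$. Since the sum vanishes, both nonpositive terms must vanish separately: $I_1 = 0$ and $I_2 - I_3 = 0$.

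From $I_2 - I_3 = 0$ I would extract that the measure carries no mass. At any $t_0$ with $h(r(t_0)) = 0$, Lemma~\ref{le:signe_nd} gives $\langle n(t_0), d(t_0)\rangle > 0$, so the bounds $I_2 \le (t-t_0)\,\mathit{Cst}\int_{[t_0,t)}\mu$ and $I_3 \ge \tfrac12\langle n(t_0), d(t_0)\rangle\int_{[t_0,t)}\mu$ yield, for $t$ small enough, $0 = I_2 - I_3 \le \big((t-t_0)\,\mathit{Cst} - \tfrac12\langle n(t_0), d(t_0)\rangle\big)\int_{[t_0,t)}\mu$ with a strictly negative prefactor, forcing $\int_{[t_0,t)}\mu = 0$. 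At points $t_0$ with $h(r(t_0)) \neq 0$ the support condition $\mathrm{supp}\{\mu\}\subset\{h(r)=0\}$ already provides a right-neighbourhood of zero mass. Thus the distribution function $t\mapsto \int_{[t_1',t)}\mu$ is locally constant from the right at every point of $(t_1',t_2')$ and, being monotone, is constant, i.e.\ $\int_{[t_1',t_2')}\mu = 0$.

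Finally, the collinearity is a direct reading of $I_1 = 0$. With the measure now gone, $q_r$ is constant on the interval and $\dot p_v = -q_r$; the vanishing of $I_1 = -\int_{t_0}^t \langle \dot p_v, \dot d\rangle\,ds$ together with the equality-case remark following Lemma~\ref{le:signPrd} forces $\langle \dot p_v, \dot d\rangle = 0$ a.e., which is exactly collinearity of $p_v$ and $\dot p_v$, equivalently of $p_v$ and $q_r$, when the pointing constraint is inactive, and of $\overline p_v$ and $\dot{\overline p}_v$, equivalently of $\overline p_v$ and $\overline q_r$, when it is active. The step I expect to be the main obstacle is the measure-vanishing claim: one must argue the local estimate at \emph{every} $t_0$, carefully splitting into the constraint-active case (where Lemma~\ref{le:signe_nd} is essential and $t$ must be taken small depending on $\langle n(t_0),d(t_0)\rangle$) and the inactive case, and then patch these local conclusions into the global statement through the monotonicity of the distribution function of $\mu$.
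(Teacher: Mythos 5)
Your argument is correct and follows essentially the same route as the paper's proof: deduce $\langle q_r,d\rangle\equiv 0$ from $\dot\Psi=0$ and Proposition~\ref{le:prop}, then force $I_1=0$ and $I_2-I_3=0$ in the decomposition~\eqref{eq:I123}, extract the vanishing of $\mu$ from Lemma~\ref{le:signe_nd}, and read the collinearity off the equality case of Lemma~\ref{le:signPrd} (equivalently Corollary~\ref{le:coro_free_arcs}). Your write-up is in fact somewhat more explicit than the paper's about the patching of the local measure-vanishing estimates into the global statement.
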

\begin{proof}
Assume $\Psi(t)=0$ on $(t_1',t_2')$. Then $\dot \Psi(t)=0$ a.e.\ on $(t_1',t_2')$, and it follows from Proposition~\ref{le:prop} that $\langle q_r(t),d (t) \rangle$ is constantly equal to zero on $(t_1',t_2')$. If the constraint is not active on $(t_1',t_2')$, the conclusion follows from Corollary~\ref{le:coro_free_arcs}. If the constraint is active at some time $t_0' \in (t_1',t_2')$, then Formula~\eqref{eq:I123} implies that both $I_1$ and $I_2-I_3$ must be zero near $t'_0$. The latter implies
\begin{equation}
\langle n(t'_0),d (t'_0) \rangle  \int_{[t'_0,t)} \mu (ds)=0
\end{equation}
which again gives the conclusion since $\langle n(t'_0),d (t'_0) \rangle>0$ by Lemma~\ref{le:signe_nd}.
\end{proof}

\begin{lem} \label{le:sing_d}
Assume that the final cost $\ell$ verifies $\frac{\partial \ell}{\partial t} \geq 0$. If $d(t) = \frac{p_v}{\|p_v\|} $ on an interval $(t_1', t_2') \subset [0, t_f] \backslash I$, then the trajectory is not singular on that interval.
\end{lem}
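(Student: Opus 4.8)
The plan is to argue by contradiction using the sign of the maximized Hamiltonian. Suppose the trajectory were singular on $(t_1',t_2')$, i.e.\ $\Psi\equiv 0$ there while $d=p_v/\|p_v\|$. I will show that the maximized Hamiltonian $\mathcal{M}(t):=H(X(t),Q(t),u(t),p^0)$ is then strictly negative on the singular arc, whereas the transversality condition \eqref{eq:trCond} together with $\pder{\ell}{t}\geq 0$ forces $\mathcal{M}(t_f)\geq 0$; since $\mathcal{M}$ is constant in time, this is a contradiction. Note that on $(t_1',t_2')\subset[0,t_f]\backslash I$ we automatically have $p_v\neq 0$, and $d=p_v/\|p_v\|$ means the pointing constraint is inactive, so $p_{v_z}\geq\|p_v\|\cos\theta>0$; this strict positivity of $p_{v_z}$ is what will produce the sign.

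First I would pin down the adjoint structure on the singular arc. From $\Psi\equiv 0$ we get $\dot\Psi\equiv 0$, so Proposition~\ref{le:prop}-2 yields $\sca{q_r}{d}\equiv 0$, i.e.\ $\sca{q_r}{p_v}=0$. On the other hand Lemma~\ref{le:sing} gives that $\mu$ vanishes on the interval and that $p_v$ and $q_r$ are collinear. Collinearity plus orthogonality, with $p_v\neq 0$, forces $q_r\equiv 0$ on $(t_1',t_2')$. Then, evaluating the Hamiltonian in the form $H=\sca{q_r}{v}-\sca{p_v}{g}+\|u\|\,\Psi$ and using $q_r=0$ and $\Psi=0$, every control-dependent term cancels and I obtain $\mathcal{M}=-\sca{p_v}{g}=-p_{v_z}g_0<0$ on the singular arc.

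Next I would establish that $\mathcal{M}$ is constant on $[0,t_f]$. Since the dynamics and the cost carry no explicit time dependence, on any arc where $\mu$ does not act ($q_r$ constant, $\dot p_v=-q_r$) the same envelope computation as for $\dot\Psi$ applies: $\frac{d}{dt}\left(\sca{q_r}{v}-\sca{p_v}{g}\right)=\frac{T}{m}\|u\|\sca{q_r}{d}$ exactly cancels $\frac{d}{dt}(\|u\|\Psi)$ (on a bang arc $\|u\|$ is constant, on a singular arc $\Psi\equiv 0$), so $\dot{\mathcal{M}}=0$. At the instants where the measure acts, $q_r$ jumps by $-n\,\mu(\{t\})$, so the jump of $\mathcal{M}$ equals $-\mu(\{t\})\sca{n}{v}$; but $\sca{n}{v}=\dot h=0$ on the contact set, since a touch point is a minimum of $h$ and $\dot h\equiv 0$ on a boundary arc (the same tangency argument as in Lemma~\ref{le:signe_nd}), and for a boundary arc the continuous measure contribution to $\dot{\mathcal{M}}$ vanishes for the same reason. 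Hence $\mathcal{M}$ has zero a.e.\ derivative and no jumps, so it is constant, equal to $-p_{v_z}g_0<0$. Finally, by the maximization condition $\mathcal{M}(t_f)=\max_{w\in\U}H(X(t_f),Q(t_f),p^0,w)$, so \eqref{eq:trCond} gives $\mathcal{M}(t_f)=-p^0\pder{\ell}{t}(t_f,X(t_f))$, which is $0$ if $p^0=0$ and equals $\pder{\ell}{t}\geq 0$ if $p^0=-1$; in both cases $\mathcal{M}(t_f)\geq 0$, contradicting $\mathcal{M}\equiv -p_{v_z}g_0<0$.

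The main obstacle is the rigorous constancy of $\mathcal{M}$ in the presence of the state-constraint measure. One must handle the bounded-variation part of $q_r$ generated by $\mu$ and verify carefully that $\sca{n}{v}$ vanishes on the whole support of $\mu$, both at isolated touch points and along boundary subarcs that may occur on the final Max arc before $t_f$. Here the second-order nature of the glide-slope constraint (the control enters only through $\ddot h$, cf.\ \eqref{eq:ddoth}) is exactly what guarantees $\dot h=0$ on the contact set and hence the absence of jumps in $\mathcal{M}$; the remaining arguments are the routine envelope computation and the reading of the transversality condition.
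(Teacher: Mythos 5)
Your proof is correct and follows essentially the same route as the paper's: deduce $q_r\equiv 0$ on the singular arc from Lemma~\ref{le:sing} and $\dot\Psi=0$, reduce the Hamiltonian to $-p_{v_z}g_0$, and contradict the transversality condition \eqref{eq:trCond} under $\frac{\partial \ell}{\partial t}\geq 0$ via the positivity of $p_{v_z}$ forced by the pointing cone. The only difference is that you explicitly verify the constancy of the maximized Hamiltonian across bang arcs and the state-constraint measure, a step the paper's proof uses implicitly.
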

\begin{proof}
If the trajectory is singular, then by Lemma \ref{le:sing}, $q_r$ and $p_v$ are collinear. Moreover, when the pointing constraint is not active,
$\dot{\Psi} = 0$ implies from \eqref{eq:dPsi} that $\sca{q_r}{d} = \frac{\sca{q_r}{p_v}}{\|p_v\|} = 0$. Thus $q_r = 0$. 
Now, from \eqref{eq:H}, which is equivalent to\[ H(X,P,p^0,u) = \|u\|\Psi + \sca{q_r}{v} - p_{v_z}g_0, \] the expression of the Hamiltonian becomes \[ H(X,P,p^0,u) =  - p_{v_z}g_0.\] 
The transversality condition \eqref{eq:trCond} and our condition on $\ell$ then imply that $p_{v_z} \leq 0$, which contradicts the assumption that $d$ is in the cone.
\end{proof}
\begin{rem}
The assumption $\frac{\partial \ell}{\partial t} \geq 0$ means that the cost penalizes the total duration of the trajectory. It is a very natural assumption and is verified for the vast majority of the practical problems with free final time.
\end{rem}

\subsection{Structure of the optimal control when considering the effect of the atmosphere}
The result of Theorem \ref{th:BOB} can be extended when considering the effect of an atmosphere. For the sake of simplicity, we do not consider here the pointing constraint. We model the effect of an atmosphere at low altitude by a constant pressure applying a force in the direction of the launcher, which introduces a new term $\frac{\sigma}{\|u\|}$ in the equation of $\dot v$. The new dynamics are
\begin{equation}\label{eq:dynSP}
\begin{cases}
\dot{r} &= v, \\
\dot{v} &= \left(T -\frac{\sigma}{\|u\|}\right)\displaystyle{\frac{u}{m}}  - g , \\
\dot{m} &= -q \| u \|,\\
\end{cases}
\end{equation}
where $\sigma$ is a parameter which is assumed to be constant and depends on the engine nozzle exit area and on the atmospheric pressure.
We make the following assumption.
\begin{assum} \label{as:SP}  \normalfont
The net thrust is always positive: $Tu_{min} \geq \sigma$.
\end{assum}
The optimal control problem is formulated as follows. 
\begin{problem}\label{pb:completePbSP}
\[
\min   \ell(t_f, X(t_f)) \text{ such that}\]
\[
\begin{cases} 
	\begin{array}{l}
			\left(r(\cdot), v(\cdot), m(\cdot)\right)      \in \R^3\times\R^3\times \R \;      \text{follows \eqref{eq:dynSP}}, \\
			(r,v,m)(0)=\left(r_0, v_0 ,m_0\right), \\
			(z,v_z)(t_f)=\left(0, 0 \right), \\
			m(t) > m_e \; \forall t \in [0,t_f),\\
			u_{min} \leq \|u\| \leq u_{max}.  \\
	\end{array}
\end{cases}
\]

\end{problem}
The main result of this section studying Problem \ref{pb:completePbSP} is stated in the next theorem.
\begin{theorem}\label{th:BOBSP}
Consider an optimal trajectory on $[0,t_f]$. Then, the control $u(t)$ is in the Max-Min-Max or the Max-Singular-Max  form, i.e. there exists $t_1$ and $t_2$ with $0 \leq t_1 \leq t_2 \leq t_f$ such that 
\[\|u\|(t) =
\begin{cases}
& u_{max} \; \normalfont\text{if} \; t \in [ 0, t_1)\cup (t_2, t_f], \\
& u_{min} \;  \normalfont\text{or singular if} \; t \in [t_1, t_2].\\
\end{cases}
\]
\end{theorem}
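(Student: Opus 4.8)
The plan is to re-run the argument behind Theorem~\ref{th:BOB}, tracking how the atmospheric term $-\frac{\sigma}{\|u\|}\frac{u}{m}$ modifies each step. Since Problem~\ref{pb:completePbSP} carries neither the pointing constraint nor the glide-slope constraint, the application of the PMP is lighter: there is no measure $\mu$, so $p_r$ is constant, and the admissible directions form the whole sphere, so the optimal direction is $d(t)=p_v(t)/\|p_v(t)\|$ whenever $p_v(t)\neq0$. Writing $u=\alpha d$ with $\alpha=\|u\|$, the part of the Hamiltonian depending on the control is $\varphi=\alpha\big(\frac{T}{m}\sca{p_v}{d}-p_mq\big)-\frac{\sigma}{m}\sca{p_v}{d}$, and the atmospheric contribution $-\frac{\sigma}{m}\sca{p_v}{d}$ does not depend on $\alpha$. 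First I would exploit this to obtain the exact analogue of Lemma~\ref{lem: sol_u}: the norm of the control is still governed by the \emph{same} switching function $\Psi=\frac{T}{m}\|p_v\|-p_mq$, with $\|u\|=u_{max}$ when $\Psi>0$ and $\|u\|=u_{min}$ when $\Psi<0$. Here Assumption~\ref{as:SP} guarantees $T\alpha-\sigma\ge0$, so that maximizing over the direction still yields $d=p_v/\|p_v\|$. I would also record that now $\dot p_m=\frac{T\|u\|-\sigma}{m^2}\|p_v\|\ge0$, i.e.\ $p_m$ is nondecreasing, again thanks to Assumption~\ref{as:SP}.

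Second, I would recompute the derivative of $\Psi$ along the lines of Proposition~\ref{le:prop}. The function $\Psi$ is still absolutely continuous (it is a Lipschitz function of $p_v$ and $m$), but a direct computation now gives the modified formula $\dot\Psi=-\frac{T}{m}\sca{p_r}{d}+\frac{q\sigma}{m^2}\|p_v\|$, whose second, nonnegative term is the new atmospheric contribution. Setting $g(t)=\sca{p_r}{d(t)}$ and $e(t)=\frac{q\sigma}{Tm}\|p_v(t)\|\ge0$, this reads $\dot\Psi=\frac{T}{m}\,(e-g)$. As in the unconstrained case, $g$ is nonincreasing by Cauchy--Schwarz since $p_r$ is constant. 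In the atmosphere-free problem one had $e\equiv0$, so $\mathrm{sign}(\dot\Psi)=\mathrm{sign}(-g)$ changed once and $\Psi$ was valley-shaped; the whole difficulty is that $e$ may now spoil this monotonicity where $g>0$.

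Third --- and this is the heart of the proof --- I would show that $e-g$ still changes sign at most once, upward. The crucial simplification is that, with no measure, $p_v(t)=p_v(0)-t\,p_r$ is \emph{affine}, so $\|p_v\|$ is convex and $g(t)=\|p_r\|^2\,(t_\star-t)/\|p_v(t)\|$ vanishes exactly at the minimizer $t_\star$ of $\|p_v\|$, being positive on $[0,t_\star)$ and negative afterwards. For $t>t_\star$ one has $g<0\le e$, hence $\dot\Psi>0$, so every zero $t_c$ of $e-g$ satisfies $g(t_c)\ge0$. At such a $t_c$ one has $e(t_c)=g(t_c)$, and using $\dot e=-\frac{q\sigma}{Tm}g+\frac{q\|u\|}{m}e$ one gets the key identity $\dot e(t_c)=\frac{q}{m}\,g(t_c)\big(\|u(t_c)\|-\tfrac{\sigma}{T}\big)\ge0$, where the sign comes precisely from Assumption~\ref{as:SP} (so that $\|u\|\ge u_{min}\ge\sigma/T$) together with $g(t_c)\ge0$. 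Since $\dot g\le0$, this yields $\frac{d}{dt}(e-g)(t_c)=\dot e(t_c)-\dot g(t_c)\ge0$ at every zero of $e-g$: the function $e-g$ can only cross zero from below, so it is negative, then zero, then positive, i.e.\ $\dot\Psi$ changes sign once, from $-$ to $+$. Consequently $\Psi$ is decreasing then increasing, its sublevel set $\{\Psi\le0\}$ is a single interval $[t_1,t_2]$, and the Max-Min-Max / Max-Singular-Max structure follows exactly as in the concluding argument of the proof of Theorem~\ref{th:BOB}.

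The step I expect to be the main obstacle is making the single-crossing argument of the third paragraph fully rigorous, because the inequalities are only non-strict and $\dot e$ jumps at the control switches. The degenerate situations to dispatch are: the at-most-one instant where $p_v=0$, i.e.\ the set $I$, handled as in Remark~\ref{re:dcontinuous} and Proposition~\ref{le:prop}; the collinear case $p_v(0)\parallel p_r$ (in particular $p_r=0$), where $d$ and $g$ are constant and $\Psi$ is directly monotone; and the boundary case $Tu_{min}=\sigma$ of Assumption~\ref{as:SP}, where $\dot e$ may vanish on min-arcs. Away from these, one checks that at a zero $t_c$ of $e-g$ either $g(t_c)>0$ (then $\dot e(t_c)>0$) or $g(t_c)=0$ (then $t_c=t_\star$ and $\dot g(t_c)<0$), so that $\frac{d}{dt}(e-g)(t_c)>0$ strictly, which rules out any downward crossing and closes the argument.
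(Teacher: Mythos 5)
Your computations agree with the paper's up to and including the derivative of the switching function: the same reduction of the maximization (the paper's Lemma~\ref{le:kktSP}, where Assumption~\ref{as:SP} indeed enters to get $d=p_v/\|p_v\|$), the same $\Psi=\frac{T}{m}\|p_v\|-p_mq$, and the same identity $\dot\Psi=\frac{1}{m}\bigl(\frac{\sigma q}{m}\|p_v\|-T\sca{q_r}{\tfrac{p_v}{\|p_v\|}}\bigr)=\frac{T}{m}(e-g)$. You then diverge at the decisive step. The paper does not run a crossing argument on $e-g$: it writes $\dot\Psi=K\Psi+\frac{1}{m}f$ with $K=\frac{\sigma q}{Tm}\ge0$ and $f=\frac{\sigma q^2}{T}p_m-T\sca{q_r}{\tfrac{p_v}{\|p_v\|}}$, so that $\frac{d}{dt}\bigl[\Psi e^{-\int_0^tK}\bigr]$ has the sign of $f$; since $p_m$ is increasing (your own observation, via Assumption~\ref{as:SP}) and $\sca{q_r}{p_v/\|p_v\|}$ is nonincreasing (Lemma~\ref{le:propSP}), $f$ is monotone and changes sign at most once from $-$ to $+$, hence $\Psi e^{-\int K}$ is valley-shaped and its sign pattern is $+,-,+$; the positive exponential transfers this to $\Psi$. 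The integrating factor is precisely the device that removes the difficulty you flag: it converts ``$\dot\Psi$ changes sign once'' into ``a monotone function changes sign once'', which needs neither strict inequalities nor a case analysis at the zeros.

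As submitted, your version of that step has a genuine gap. The statement ``at every zero $t_c$ of $e-g$ one has $\frac{d}{dt}(e-g)(t_c)\ge0$'' does not imply that $e-g$ is negative-then-positive: the function $t\mapsto-\max(0,t)^2$ satisfies the same property (its zero set is $(-\infty,0]$, where its derivative vanishes) and yet passes from zero to negative values. So only the strict version proves anything, and your closing sentence asserts strictness through ``$g(t_c)>0\Rightarrow\dot e(t_c)>0$'', which fails exactly in the boundary case $Tu_{min}=\sigma$ with $\|u(t_c)\|=u_{min}$ (then $\dot e(t_c)=0$), a case you list as a degenerate situation but never dispatch; moreover $\dot e$ need not exist at $t_c$ if $t_c$ is a switching time of $\|u\|$. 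The argument is salvageable --- in the non-collinear case $p_v(t)=p_v(0)-tp_r$ is never parallel to $p_r$, so $\dot g(t)<0$ \emph{strictly for every} $t$, and $\frac{d}{dt}(e-g)(t_c)\ge-\dot g(t_c)>0$ without any appeal to $\dot e>0$; the collinear case and $p_r=0$ give a monotone $\Psi$ directly --- but this repair must actually be carried out, since it is the step that carries the theorem. I would recommend replacing the crossing argument by the integrating-factor computation, which closes all of these cases at once.
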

The following developments will be dedicated to the proof of Theorem \ref{th:BOBSP}.
The optimality conditions are similar to those for Problem \ref{pb:completePb}, we only detail here the changes. 
We define the Hamiltonian of Problem \ref{pb:completePbSP} as 
\begin{equation}\label{eq:HSP}
H (X,P,u,p^0) =  \langle p_r, v  \rangle + \left\langle p_v, \left(T -\frac{\sigma}{\|u\|}\right)\displaystyle{\frac{u}{m}}   - g  \right\rangle- p_m q \Vert u \Vert.
\end{equation}
The dynamics of the adjoint vector are 
\begin{equation}\label{eq:dynpSP}
\begin{cases}
\dot{p}_r(t) &= 0 ,\\
\dot{p}_{v}(t) &= - q_r (t),\\
\dot{p}_m(t) &=\frac{1}{m(t)^2} \left \langle p_v(t), \left(T -\frac{\sigma}{\|u(t)\|}\right)\displaystyle{u(t)}  \right \rangle.\\
\end{cases}
\end{equation}

\begin{lem}\label{le:kktSP}
The optimal control of Problem \ref{pb:completePbSP} is expressed as follows
\[ 
\|u(t)\| = 
\begin{cases}
u_{max} \ \text{if} \ \Psi(t) >0,\\
u_{min} \ \text{if} \ \Psi(t) <0,\\
\end{cases}
\]
where 
\[ \Psi(t) = \frac{T}{m(t)}\|p_v(t)\| - p_m(t)q, \]
and for all $t$ such that $u(t) \neq 0$, 
\[ \frac{u(t)}{\|u(t)\|} = \frac{p_v(t)}{\|p_v(t)\|} .\]
\end{lem}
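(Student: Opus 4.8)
The plan is to mirror the proof of Lemma~\ref{lem: sol_u}, applying the maximization condition of the Pontryagin Maximum Principle, while accounting for the two structural differences of Problem~\ref{pb:completePbSP}: the absence of the pointing constraint (which simplifies the optimization over directions) and the presence of the atmospheric term $\sigma/\|u\|$ (which is the source of the only real difficulty). By the maximization condition applied to the Hamiltonian~\eqref{eq:HSP}, for a.e.\ $t$ the optimal control $u(t)$ maximizes
\[
\varphi(t,w) = \left\langle p_v, \left(T - \frac{\sigma}{\|w\|}\right)\frac{w}{m}\right\rangle - p_m q\|w\|
\]
over all $w \in \R^3$ with $u_{min} \le \|w\| \le u_{max}$.

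First I would perform the change of variables $w = \alpha d$, with $\alpha = \|w\| \in [u_{min}, u_{max}]$ and $d \in \mathcal{S}^2$. The key algebraic simplification is that $\left(T - \sigma/\alpha\right)\alpha = T\alpha - \sigma$, so that
\[
\varphi(t,w) = \frac{1}{m}(T\alpha - \sigma)\,\sca{p_v}{d} - p_m q\,\alpha.
\]
This is exactly where Assumption~\ref{as:SP} enters: since $Tu_{min} \ge \sigma$, we have $T\alpha - \sigma \ge Tu_{min} - \sigma \ge 0$ for every admissible $\alpha$. Hence, for each fixed $\alpha$, the coefficient multiplying $\sca{p_v}{d}$ is nonnegative, and maximizing over $d \in \mathcal{S}^2$ yields $d = p_v/\|p_v\|$ whenever $p_v \neq 0$, with maximal value $\sca{p_v}{d} = \|p_v\|$. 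The decisive point is that this optimal direction is the \emph{same} for every admissible $\alpha$, so the direction and magnitude optimizations decouple, giving the stated formula $u(t)/\|u(t)\| = p_v(t)/\|p_v(t)\|$ for every $t$ with $u(t) \neq 0$.

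It then remains to optimize over $\alpha$. Substituting $\sca{p_v}{d} = \|p_v\|$, the objective becomes
\[
\alpha\left(\frac{T}{m}\|p_v\| - p_m q\right) - \frac{\sigma}{m}\|p_v\|,
\]
whose last term is independent of $\alpha$ and whose remaining part is affine in $\alpha$ with slope precisely $\Psi(t) = \frac{T}{m}\|p_v(t)\| - p_m(t) q$. Therefore the maximum over $[u_{min}, u_{max}]$ is attained at $\alpha = u_{max}$ when $\Psi(t) > 0$ and at $\alpha = u_{min}$ when $\Psi(t) < 0$, which is exactly the claimed bang structure; the degenerate case $p_v = 0$ leaves the direction free but does not affect this rule, since then $\Psi = -p_m q$.

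The only genuine obstacle is the coupling between direction and magnitude induced by the $\sigma/\|u\|$ term: a priori the optimal direction could depend on $\alpha$ through the sign of $T\alpha - \sigma$, and a sign change of this factor would break the clean decoupling underlying Lemma~\ref{lem: sol_u}. Assumption~\ref{as:SP} removes this difficulty by forcing $T\alpha - \sigma$ to keep a constant nonnegative sign over the whole admissible range $[u_{min}, u_{max}]$, after which the argument proceeds exactly as in the constant-pressure-free case.
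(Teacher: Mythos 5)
Your proof is correct and follows the same route as the paper's: the change of variables $w=\alpha d$ with $\alpha=\|w\|$, $d\in\mathcal{S}^2$, followed by decoupling the direction and magnitude optimizations. The paper's own proof simply asserts that ``it is clear that this maximization yields the statement of the lemma,'' whereas you make explicit the one point that actually needs checking --- that Assumption~\ref{as:SP} forces $T\alpha-\sigma\ge 0$ on the whole admissible range, so the maximizing direction $d=p_v/\|p_v\|$ is the same for every $\alpha$ and the objective then becomes affine in $\alpha$ with slope $\Psi$ --- which is a worthwhile filling-in of the step the paper leaves implicit.
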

\begin{proof}
If $u(t)$ is an optimal control on $[0,t_f]$, then the maximization condition of the Pontryagin Principle implies that, for almost all $ t \in [0,t_f]$, $u(t)$ maximizes
\[ \varphi (t,w) =   \langle p_v, \left(T -\frac{\sigma}{\|w\|}\right)\displaystyle{\frac{w}{m}}   \rangle- p_m q \Vert w \Vert\]
among the $w \in \U$.
Making the change of variable $w = \alpha d$, with $\alpha =\Vert w \Vert$, to find $u(t)\in \U$ maximizing $\varphi$ amounts to find $\alpha$ and $d$ maximizing
\[ \varphi (t,\alpha, d) = \alpha \left( \frac{T}{m} \langle  p_v,  d  \rangle - p_m q \right) - \frac{\sigma}{m}\langle  p_v,  d  \rangle \]
under the conditions $ u_{min} \leq \alpha \leq u_{max} $ and $d \in \mathcal{S} ^2$. It is clear that this maximization yields to the statement of the lemma.

\end{proof}

\begin{lem}\label{le:propSP}
Consider an optimal trajectory on $[0,t_f]$. Then:
\begin{enumerate}
\item $\Psi$ is absolutely continuous on $[0, t_f]$;
  \item $t \mapsto \left \langle q_r(t), \frac{p_v(t)}{\|p_v(t)\|} \right \rangle$ is a nonincreasing function on a full measurement subset of $[0,t_f)$.
\end{enumerate}
\end{lem}
\begin{proof}
We do not detail the proof here as it is very similar to the one of Proposition \ref{le:prop}-Points $1$,$3$.
\end{proof}
\begin{proof}[Proof of Theorem \ref{th:BOBSP}]
Let study the sign of $\Psi(t)$. Its derivative is expressed by
\[
  \dot \Psi(t) =  \frac{1}{m(t)} \left( \frac{\sigma q}{m(t)} \|p_v\| - T \left \langle q_r(t), \frac{p_v(t)}{\|p_v(t)\|} \right \rangle \right), 
  \]
 and can be written as follows 
\[
  \dot \Psi(t) =   K(t)  \Psi(t) + \frac{1}{m(t)}f(t) , 
  \]
  where $K$ and $f$ are functions expressed by
  \[ K(t) = \frac{\sigma q}{Tm(t)} \qquad \text{and} \qquad f(t) = \frac{\sigma q^2}{T}p_m(t) - T\left \langle q_r(t), \frac{p_v(t)}{\|p_v(t)\|} \right \rangle .
  \]
  We deduce that 
  \[ \frac{d}{dt} \left[\Psi(t)e^{-\int_0^t K(s) ds} \right] = \frac{e^{-\int_0^t K(s) ds}}{m(t)}f(t).\]
 
 Using the expression of the optimal control given in Lemma \ref{le:kktSP} and from \eqref{eq:dynpSP}, we obtain that the derivative of $p_m$ is expressed by $\frac{\|p_v\|}{m(t)^2}\left(T\|u\| - \sigma \right)$; therefore, from Assumption \ref{as:SP}, $\dot p_m(t) >0$ and $p_m$ is increasing. From Lemma \ref{le:propSP} $\left \langle q_r(t), \frac{p_v(t)}{\|p_v(t)\|} \right \rangle $ is nonincreasing, thus we deduce that $f(t)$ is increasing and can change of sign at most one time. Consequently, $\Psi(t)e^{-\int_0^t K(s) ds}$ can changes of sign at most two times, and in that case it is positive, then negative and then positive, and $\Psi$ has the same property.
\end{proof}

\section{Specific results with pointing and altitude constraints}
We are now interested in the particular case of an altitude constraint, i.e.\ the case where $\gamma = 0 $. In that case, $n = e_z$, so $\overline q_r = \overline p_r$ is constant even when $h(r) = 0$.

\subsection{Singular arcs }
Despite Theorem \ref{th:BOB} does not exclude the existence of singular trajectories, it seems that they do not appear in general. They seldom arise when solving landing problems numerically, and the next lemma states that they do not exist in the case of an altitude constraint ($\gamma = 0$) if the initial conditions are generic, which is defined in Lemma \ref{le:sing_ngen}. The result of Lemma \ref{le:sing_ngen} has been proved under the following assumption.
\begin{assum}\label{as:pinpoint} \normalfont
We assume that the final position and velocity are fixed and null, i.e.\  $r(t_f) = r_f = 0$ and $v(t_f) = v_f = 0$. 
\end{assum}
\begin{lem}\label{le:sing_ngen}
If the optimal trajectory contains a singular arc, then the initial conditions are such that $( x, y)(0)$ and $( v_x, v_y )(0)$ are collinear. Consequently, for generic initial conditions there are no singular arcs in the optimal trajectories.
\end{lem}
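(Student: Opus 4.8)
The plan is to exploit the special structure created by $\gamma = 0$: the horizontal part of the adjoint decouples from the state constraint and evolves \emph{affinely} in time, which lets me upgrade a collinearity relation that holds only on the singular arc into one holding on all of $[0,t_f]$, and from there read off the claim about the initial conditions.

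First I would record the consequences of $\gamma = 0$. Since $n = e_z$, the horizontal components of $n$ vanish, so $\overline q_r(t) = \overline p_r$ is constant (using $\dot p_r = 0$), \emph{even across the support of $\mu$}. The adjoint dynamics $\dot{\overline p}_v = -\overline q_r$ then give $\overline p_v(t) = \overline p_v(0) - t\,\overline p_r$, an affine function of $t$ whose slope is the fixed vector $\overline p_r$. Next, assuming the optimal trajectory has a singular arc, i.e.\ an interval on which $\Psi \equiv 0$, I pass to a subinterval lying in $[0,t_f]\setminus I$. There Lemma~\ref{le:sing} gives that $\overline p_v(t)$ and $\overline q_r(t) = \overline p_r$ are collinear (in the non-active sub-case the full vectors $p_v, q_r$ are collinear, which makes their horizontal projections collinear as well; alternatively Lemma~\ref{le:sing_d} already forces the pointing constraint to be active). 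Because $\overline p_v(t)$ is affine with slope $\overline p_r$, collinearity with $\overline p_r$ on a whole interval forces $\overline p_v(0)$ itself to be collinear with $\overline p_r$ when $\overline p_r \neq 0$, while if $\overline p_r = 0$ then $\overline p_v$ is constant and nonzero. In either case there is a fixed direction $\hat e \in \mathcal{S}^1$ with $\overline p_v(t)$ parallel to $\hat e$ for every $t \in [0,t_f]$, vanishing at most at one instant.

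Then I would transfer this to the state. By Lemma~\ref{lem: sol_u}, whenever $u(t) \neq 0$ the horizontal direction $\overline d(t)$ is a nonnegative multiple of $\overline p_v(t)$ (both in the active case $\overline d = \sin(\theta)\,\overline p_v/\|\overline p_v\|$ and the non-active case $\overline d = \overline p_v/\|p_v\|$), hence parallel to $\hat e$; so $\overline u(t) = \|u(t)\|\,\overline d(t)$ is parallel to $\hat e$ for a.e.\ $t$. Integrating $\dot{\overline v} = \frac{T}{m}\overline u$ and using $\overline v(t_f) = 0$ from Assumption~\ref{as:pinpoint} shows $\overline v(t)$ is parallel to $\hat e$ for all $t$; integrating $\dot{\overline r} = \overline v$ and using $\overline r(t_f) = 0$ shows $\overline r(t)$ is parallel to $\hat e$ as well. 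In particular $(x,y)(0) = \overline r(0)$ and $(v_x,v_y)(0) = \overline v(0)$ are both parallel to $\hat e$, hence collinear, which is the first assertion. The genericity statement then follows because the collinear initial configurations satisfy $x(0)v_y(0) - y(0)v_x(0) = 0$, a lower-dimensional (measure-zero, nowhere dense) subset of admissible initial data.

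The main obstacle is precisely this bridge between costate collinearity, which a priori holds only on the singular arc, and state collinearity at $t=0$. The two enabling facts are that $\gamma = 0$ makes $\overline q_r$ globally constant, so $\overline p_v$ is globally affine and the collinearity propagates off the arc to all of $[0,t_f]$, and that Assumption~\ref{as:pinpoint} pins the horizontal final state to zero, so that a control confined to a single line keeps the horizontal motion on that line all the way back to $t=0$. Care is also needed with the degenerate configurations $\overline p_r = 0$ and the isolated instants where $\overline p_v(t) = 0$ (at which $\overline d$ is undetermined); but these form a measure-zero set and do not affect the integrations above.
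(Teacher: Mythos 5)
Your proof is correct and follows essentially the same route as the paper's: Lemma~\ref{le:sing} gives the collinearity of $\overline p_v$ and $\overline p_r$ on the singular arc, the direction $\overline d$ is tied to $\overline p_v$, and the zero final conditions of Assumption~\ref{as:pinpoint} let one integrate the line-confined horizontal motion back to $t=0$. You in fact supply the steps the paper leaves implicit --- the global affineness of $\overline p_v$ when $\gamma = 0$ and the propagation of the collinearity off the singular arc to all of $[0,t_f]$ --- so your write-up is, if anything, more complete than the published two-line argument.
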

\begin{proof}
Consider a trajectory containing a singular arc on an interval $[t_1', t_2']$. According to Lemma \ref{le:sing}, $\overline p_v(t_1')$ and $\overline p_r$ are collinear. 
Set $\overline d=(d_x,d_y)$, $\overline r = ( x, y )$ and $\overline v = ( v_x, v_y)$. 
Since $(r(t_f), v(t_f)) = (0,0)$, $\overline d$ has the same direction as $\overline p_v$ and as $\dot{\overline v}$ by the dynamics of the vehicle. We conclude that $\overline r(0)$ and $\overline v(0)$ are collinear.

\end{proof}

Lemma \ref{le:sing_ngen} helps to explain why singular trajectories seem to be rare and are often dismissed in the literature. Assumptions have been made here in order to give a simple proof but they could probably be extended, for example by continuity arguments. Particularly, the assumption that initial conditions are generic mainly spreads out problems in two dimensions, but it is clear that even in that case singular trajectories are not frequent, and they did not appear in numerical results of Section \ref{sec:num}.
\subsection{Number of contact points on a particular case}
We study now a particular case of Problem 1 in order to show that optimal trajectories meet the state constraint only a very limited number of times. More precisely, Corollary \ref{co:contacts} states that in the conditions of Lemma \ref{le:sing_ngen} there can be at most two contacts before reaching the final point. To complete the proof, we made the assumption that the mass $m$ is constant (i.e.\ $q=0$). Let us start with some definitions.

Given a trajectory, we say that $[t_{c_1},t_{c_2}]$ is a \textit{boundary interval} if $h(t)=0$ for all $t \in [t_{c_1},t_{c_2}]$, and $[t_{c_1},t_{c_2}]$ is the largest interval satisfying this condition and containing $t_{c_1},t_{c_2}$. When the boundary interval is reduced to a point $t_c$ (i.e.\ $t_{c_1}=t_{c_2}=t_c$), we rather say that $t_c$ is a \textit{contact point}.

\begin{lem}\label{le:contact}
There is at most one contact point or boundary interval on each Max or Min arc.
\end{lem}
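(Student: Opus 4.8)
The plan is to reduce everything to the second-order behaviour of the constraint function $h[t]=h(r(t))$ along a single Max or Min arc. Since we work in the case $\gamma=0$, we have $n=e_z$ and $h[t]=z(t)$, so $\dot h[t]=v_z(t)$ is genuinely $C^1$ (no $\overline r\neq0$ issue), and by \eqref{eq:ddoth} together with $\dot n\equiv0$ we get $\ddot h[t]=\frac{T}{m(t)}\|u(t)\|\,d_z(t)-g_0$. On a Max (resp.\ Min) arc $\|u\|$ is constant, and since $q=0$ the mass is constant as well, so $c:=\frac{T}{m}\|u\|$ is a positive constant and $\ddot h[t]=c\,d_z(t)-g_0$ holds on the whole arc.

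I would argue by contradiction. Suppose a fixed arc carried two distinct contact points and/or boundary intervals, and let $\tau_1<\tau_2$ be the right endpoint of the first and the left endpoint of the second, so that $h>0$ on $(\tau_1,\tau_2)$ while $h(\tau_1)=h(\tau_2)=0$. At each endpoint $h$ attains the minimal value $0$, hence $v_z(\tau_1)=v_z(\tau_2)=0$; moreover $h$ rises from $0$ just after $\tau_1$ and falls back to $0$ just before $\tau_2$, so $v_z>0$ near $\tau_1^{+}$ and $v_z<0$ near $\tau_2^{-}$. Because $h>0$ on $(\tau_1,\tau_2)$, the support condition $\mathrm{supp}\{\mu\}\subset\{h=0\}$ forces $\mu\big((\tau_1,\tau_2)\big)=0$; hence $q_r$ is constant and $p_v(\cdot)$ is \emph{affine} on $(\tau_1,\tau_2)$.

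The crux is to show that $\{\,t\in(\tau_1,\tau_2):\ \ddot h[t]>0\,\}$ is an interval. From Lemma~\ref{lem: sol_u} one checks that, wherever $p_v\neq0$, $d_z=\max\big(\cos\theta,\ p_{v_z}/\|p_v\|\big)$, so $\ddot h=\max\big(c\cos\theta-g_0,\ c\,p_{v_z}/\|p_v\|-g_0\big)$. If $c\cos\theta>g_0$ then $\ddot h>0$ throughout, $v_z$ is strictly increasing, and this already contradicts $v_z(\tau_1)=v_z(\tau_2)=0$. Otherwise $\{\ddot h>0\}=\{\psi>0\}$ with $\psi:=c\,p_{v_z}-g_0\|p_v\|$, using that on $\{\ddot h>0\}$ one has $p_{v_z}>0$. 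Now $p_{v_z}$ is affine and $\|p_v\|$ is convex (norm of an affine function), so $\psi$ is \emph{concave} on $(\tau_1,\tau_2)$ and its superlevel set $\{\psi>0\}$ is an interval; the at most one point where $p_v=0$ (here $p_v$ is affine and $\not\equiv0$, since an arc with $p_v\equiv0$ would be singular, not Max/Min) does not affect this. It is exactly this concavity, hence the constancy of $c$ and the hypothesis $q=0$, that forbids $\ddot h$ the oscillation needed below; this is the main obstacle of the proof.

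Writing $J=\{\ddot h>0\}$ for this interval, $v_z$ is nondecreasing on $J$ and nonincreasing on $(\tau_1,\tau_2)\setminus J$. If $J$ does not contain a right-neighbourhood of $\tau_1$, then $v_z\le0$ just after $\tau_1$, contradicting $v_z>0$ there; hence $J=(\tau_1,\beta)$ for some $\beta\le\tau_2$, so $v_z\ge0$ on $(\tau_1,\beta)$ while $v_z$ is nonincreasing on $(\beta,\tau_2)$ with $v_z(\tau_2)=0$. A nonincreasing function ending at $0$ stays $\ge0$, so $v_z\ge0$ on all of $(\tau_1,\tau_2)$, contradicting $v_z<0$ near $\tau_2^{-}$. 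This contradiction shows that no arc can carry two contact points or boundary intervals, proving the lemma.
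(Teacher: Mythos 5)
Your proof is correct, and at the decisive step it takes a genuinely different route from the paper's. Both arguments open identically: two contacts on one arc force $\ddot h=\dot v_z$ to be nonnegative at the two contact times and negative somewhere in between, and on the open interval separating them the measure $\mu$ vanishes, so $q_r$ is constant and $p_v$ is affine. The paper then proves the stronger fact that $d_z$ is \emph{monotone} along the free arc: it splits into the cases $p_0,q_r$ collinear or not, and in the latter case reduces to the plane spanned by $p_0$ and $q_r$, writes $d=\cos\phi\,u_1+\sin\phi\,u_2$ and computes $\dot\phi=-\det(q_r,p_0)/\|p_v\|^2$, which has constant sign. You instead observe from Lemma~\ref{lem: sol_u} that $d_z=\max\bigl(\cos\theta,\ p_{v_z}/\|p_v\|\bigr)$ wherever $p_v\neq0$, so that (after disposing of the trivial case $c\cos\theta>g_0$) the set $\{\ddot h>0\}$ coincides with the superlevel set of $\psi=c\,p_{v_z}-g_0\|p_v\|$, which is concave as an affine function minus a convex one; hence $\{\ddot h>0\}$ is an interval, and an elementary sign analysis of $v_z$ on that interval and its complement yields the contradiction. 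Your route is shorter, avoids both the collinearity dichotomy and the angle computation, and isolates exactly where constancy of $m$ and of $\|u\|$ enter (they make $c$ constant, hence $\psi$ concave); the paper's route buys the sharper structural statement that $d_z$ itself is monotone on each free arc. Two small points of polish: (i) ``$v_z<0$ near $\tau_2^-$'' should read ``at times arbitrarily close to $\tau_2$'', or better, conclude directly that $v_z\ge 0$ on $(\tau_1,\tau_2)$ makes $h$ nondecreasing there, so $h(\tau_2)>0$, contradicting $h(\tau_2)=0$; (ii) the identity $v_z(\tau_1)=0$ requires $\tau_1$ to be interior to $[0,t_f]$ or the right end of a nontrivial boundary interval --- the degenerate case of a contact at $t=0$ with $v_z(0)>0$ is left aside, but the paper's proof makes the same implicit assumption when it asserts $\dot v_z(t_{c_1})\ge 0$.
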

\begin{proof}


Let us show that there is at most one contact point or boundary interval on each Max arc (including possibly the final point for the last Max arc). A similar reasoning on Min arcs will then give the conclusion. By contradiction, assume that the same Max arc contains two different boundary intervals $[t'_{c_1}, t_{c_1}]$ and $[t_{c_2}, t'_{c_2}]$, with $t_{c_1}< t_{c_2}$. We can moreover assume that $h(t)=z(t)>0$ on $(t_{c_1} , t_{c_2})$. Then
\begin{equation} 
\dot{v}_z (t_{c_1}) \geq 0, \quad \dot{v}_z(t_{c_2}) \geq 0,
\end{equation}
and there exists $t_b \in (t_{c_1} , t_{c_2})$ such that 
$\dot{v}_z (t_{b}) < 0$.
Note that $\dot v_z$ is an affine function of $d_z$, the vertical component of $d$,
\[ 
\dot v_z = u_{max}\frac{T}{m}d_z - g_0,
\]
so the above sign condition on $\dot v_z$ write as
\begin{equation}\label{eq:tb}
d_z(t_{c_1}) \hbox{ and } d_z(t_{c_2})\geq \frac{mg_0}{Tu_{max}}, \qquad d_z(t_{b}) < \frac{mg_0}{Tu_{max}}.
\end{equation}

Now, on $(t_{c_1},t_{c_2})$, the state constraint is inactive, therefore $q_r = q_r(t_{c_1})$ is constant and  
\[
p_v(t) = p_0 - q_rt, \quad \forall t \in  (t_{c_1},t_{c_2}).
\]
Assume first that $p_0 $ and $q_r$ are collinear, and write $p_0 = \rho_0 \delta$ and $q_r = \rho_r \delta$, with $\rho_0,\rho_r \in \R$ and $\delta \in \mathcal{S}^2$. Thus, 
    \[\frac{p_v}{\| p_v \|} =  \text{sign}(\rho_0 - \rho_r t)\delta.\]
    We deduce that in that case $d_z$ can take only two values, $\cos(\theta)$ and the  constant value $|\delta_z|$ (if this value belongs to $(\cos(\theta), 1]$), and can change value at most one time. This contradicts \eqref{eq:tb}.

Thus $p_0$ and $q_r$ are not collinear. In particular $d$ is absolutely continuous on $(t_{c_1},t_{c_2})$.
First, let us notice that when the pointing constraint is active, $d_z$ is constantly equal to $\cos(\theta)$, and study now the evolution of $d_z$ when the pointing constraint is not active. We will reduce the problem to two dimensions. Let us choose $\hat n = \pm \frac{p_0 \wedge q_r}{\|  p_0 \wedge q_r\|}$ such that $\sca{\hat n}{ e_z} \geq 0$. Then $d$, which is equal to $\frac{p_v}{\|p_v\|}$, belongs to the normal plane to $\hat n$, denoted $\hat n^\perp$. Note that $\hat n \neq e_z$. Indeed, otherwise $p_{v_z} = 0$, which implies that $d_z$ is constantly equal to $\cos(\theta)$ and is in contradiction with \eqref{eq:tb}. Let $\alpha$ be the angle between $\hat n$ and the plane $(e_x,e_y)$ and let us choose $(u_1,u_2)$ an orthonormal basis of $ \hat n ^\perp $ such that $\sca{u_1}{e_z} = 0$  and $\sca{u_2}{e_z} = \cos(\alpha)> 0$. Then, we define $\phi$ such that $d$ can be written 
 \[d = \cos(\phi) u_1 + \sin(\phi) u_2\]
        with $\phi \in [-\frac{\pi}{2}, \frac{\pi}{2}]$ and we have that \[d_z = \sin(\phi) \cos(\alpha).\]
         We have from \eqref{eq:tb} that $\phi(t_{c_1})$ and $\phi(t_{c_2})$ are in $(0, \frac{\pi}{2})$. Now, let us show that the evolution of $d_z$ contradicts $\eqref{eq:tb}$. Since $\alpha$ is constant, 
        \begin{equation} \label{eq:ddz}\dot d_z = \dot \phi \cos(\phi) \cos(\alpha),\end{equation}
        therefore, $\dot d_z$ has the same sign as $\dot \phi$, and $\dot \phi$ can be expressed thanks to the following computations. We reduce ourselves to $ \hat n^\perp$, since $p_v$, $p_0$ and $q_r$ belongs to it. We place ourselves in this plane in the coordinates defined by $(u_1, u_2)$. By abuse of notation, we will call $p_v$ the vector in two dimensions defined by $p_v = (\sca{p_v}{u_1}, \sca{p_v}{u_2})$.
        As $p_v = \|p_v\|d = \|p_v\| \begin{pmatrix} \cos(\phi)\\ \sin(\phi) \end{pmatrix}$,
        then
        \begin{align*}
            \dot p_v &= \frac{d\|p_v\|}{dt}\begin{pmatrix} \cos(\phi)\\ \sin(\phi) \end{pmatrix} + \|p_v\| \dot \phi \begin{pmatrix} -\sin(\phi)\\ \cos(\phi) \end{pmatrix}, \\
        \end{align*}
        and by multiplying on the left by $\begin{pmatrix} - \sin(\phi) & \cos(\phi) \end{pmatrix}$ we obtain
        \begin{align*}
             \begin{pmatrix} - \sin(\phi)\\ \cos(\phi) \end{pmatrix}^T (-q_r) = \|p_v\| \dot \phi.
        \end{align*}
        We deduce that
        \begin{align*}
             \dot \phi &= -\frac{1}{\|p_v\|^2}\det(q_r,p_v) = -\frac{1}{\|p_v\|^2}\det(q_r,p_0).\\
        \end{align*}
        As $q_r$ and $p_0$ are constant, we deduce that $\dot \phi$ is of constant sign, and $\dot d_z$ also from \eqref{eq:ddz}. Thus, $d_z$ is monotonous when the pointing constraint is not active and constant when it is active: we conclude that it is not possible to verify \eqref{eq:tb}.


\end{proof}
\begin{coro}\label{co:contacts}
For generic initial conditions, there is at most three contact points or boundary intervals along the trajectory.
More precisely,
\begin{enumerate}
    \item if $u_{min} < \frac{m_0 g_0}{T}$, there is along the trajectory at most two contact points or boundary intervals;
    \item if $u_{min} \cos(\theta) \geq \frac{m_0 g_0}{T}$, the only possible contact point is the final point. 
\end{enumerate}
\end{coro}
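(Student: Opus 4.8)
The plan is to combine the global Max-Min-Max structure with the per-arc bound of Lemma~\ref{le:contact}, and then to exploit the sign of $\dot v_z$ on each type of arc to sharpen the count. First I would invoke genericity: by Lemma~\ref{le:sing_ngen} the optimal trajectory carries no singular arc, so Theorem~\ref{th:BOB} provides a Max-Min-Max structure, i.e.\ at most three bang arcs. Lemma~\ref{le:contact} gives at most one contact point or boundary interval on each of these arcs, hence at most three in total, which proves the first assertion. The refinement rests entirely on the expression (valid here since $q=0$, so $m\equiv m_0$, and $\gamma=0$, so $h(r)=z$)
\[
\dot v_z(t) = \|u(t)\|\,\frac{T}{m_0}\,d_z(t) - g_0, \qquad d_z(t)=\langle e_z, d(t)\rangle \in [\cos(\theta),1],
\]
together with the elementary fact that at an interior contact point $t_c$ (a local minimum of $z$ with $z(t_c)=0$) one has $v_z(t_c)=0$ and $\dot v_z(t_c)=\ddot z(t_c)\geq 0$, while on a boundary interval $z\equiv 0$ forces $\dot v_z\equiv 0$.

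For assertion~1, I would assume $u_{min}<\frac{m_0 g_0}{T}$ and look at the Min arc, where $\|u\|=u_{min}$ and $d_z\leq 1$, so $\dot v_z \leq u_{min}\frac{T}{m_0}-g_0<0$ throughout. This strict negativity excludes any interior contact point (which would need $\dot v_z(t_c)\geq 0$) and any boundary interval (which would need $\dot v_z\equiv 0$). I would rule out the two endpoints of the Min arc as well: since $v_z$ is absolutely continuous and strictly decreasing there, it changes sign across any zero, so $z$ would become negative on one side of such a junction, contradicting $z\geq 0$. Thus the Min arc carries no contact, and the three possible contacts reduce to those on the two Max arcs, giving at most two.

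For assertion~2, I would assume $u_{min}\cos(\theta)\geq \frac{m_0 g_0}{T}$. Then on every arc $\|u\|\geq u_{min}$ and $d_z\geq\cos(\theta)$, so $\dot v_z \geq u_{min}\cos(\theta)\frac{T}{m_0}-g_0\geq 0$ on all of $[0,t_f]$. Hence $v_z$ is nondecreasing; combined with $v_z(t_f)=0$ this forces $v_z\leq 0$, so $\dot z=v_z\leq 0$ and $z$ is nonincreasing. With $z(t_f)=0$ and $z\geq 0$ this yields $z(t)\geq z(t_f)=0$, and $z$ can vanish before $t_f$ only by staying identically zero up to $t_f$; so the only contact point is $t_f$.

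The step I expect to be the main obstacle is the borderline case $u_{min}\cos(\theta)=\frac{m_0 g_0}{T}$ of assertion~2, where $\dot v_z\geq 0$ is no longer strict and a terminal boundary interval $[t_c,t_f]$ is a priori conceivable. To close this case I would note that saturating $\dot v_z=0$ means $\|u\|\,d_z=u_{min}\cos(\theta)$, which, since $\|u\|\geq u_{min}$ and $d_z\geq\cos(\theta)$ are both positive, forces simultaneously $\|u\|=u_{min}$ and $d_z=\cos(\theta)$. As the last arc of the Max-Min-Max structure is a Max arc with $\|u\|=u_{max}$, this is incompatible whenever $u_{max}>u_{min}$ (there $\dot v_z>0$ strictly, so $z$ is strictly decreasing near $t_f$), ruling out the terminal boundary interval; the fully degenerate constant-thrust case $u_{max}=u_{min}$ is handled separately. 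In the strict case $u_{min}\cos(\theta)>\frac{m_0 g_0}{T}$ the argument is immediate, since $\dot v_z>0$ makes $z$ strictly decreasing and $z>0$ on $[0,t_f)$.
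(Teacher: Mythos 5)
Your proof follows the same route as the paper's: Theorem~\ref{th:BOB}, Lemma~\ref{le:sing_ngen} and Lemma~\ref{le:contact} give the count of three, and the sign of $\dot v_z = \|u\|\frac{T}{m_0}d_z - g_0$ on the Min arc (resp.\ on the whole trajectory) yields Points 1 and 2 exactly as in the paper. You are in fact more careful than the paper about the borderline case $u_{min}\cos(\theta)=\frac{m_0 g_0}{T}$ and the possibility of a terminal boundary interval, which the paper's one-line argument glosses over.
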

\begin{proof}
The main statement follows by application of Theorem \ref{th:BOB} and Lemma \ref{le:sing_ngen}, which imply that the control is of the form Max-Min-Max (or the restriction of such a control to a subinterval), and Lemma \ref{le:contact} gives us the number of contacts.
Then, the assumption of Point 1 implies that $u_{min}T$ does not compensate the weight of the vehicle, therefore it is not possible to have a contact on a Min arc without violating the state constraint. Finally, the assumption of Point 2 implies that $u(t)T$ compensate the weight of the vehicle for any control direction, therefore the vertical velocity, and so the altitude, would remain positive after a contact with the state constraint. We deduce that it is not possible to have a contact aside from the final point.
\end{proof}

\begin{rem}
The last result has been demonstrated with the assumption of a constant mass. However we believe that it stays true if the mass varies a little. Indeed, the reasoning is still correct if $m(t_b) \sim m(t_{c_1})$ or $m(t_{c_2})$, therefore it is sufficient to assume that the mass varies only slightly between two contact points. Likewise, the same reasoning would work with a glide-slope constraint of $\gamma \neq 0$, by assuming that $n$ is constant, which means in 2 dimensions that the trajectory stays in the same half-plane $x \geq 0$ or $x \leq 0
$.
\end{rem}

\section{Numerical results} \label{sec:num}
This section presents some examples for a Mars powered descent problem, in two dimensions for the sake of simplicity. The simulations are carried out by using CasADi (\cite{Andersson2019}) with python language and the IPOPT solver. They are performed under the same conditions as in \cite{Acikmese2007}. The launcher parameters are $T = 16573 N$, $u_{min} = 0.3$ and $u_{max} = 0.8$ and $m_e = 1505kg$ and $g_0= 3.71m/s^2$ corresponds to the Mars gravitational constant. The initial state is given by  $r_0 = [2000, 1500]m $, $ v_0 = [100, -75]m/s$ and $m_0 = 1905 kg$.
The optimal control problem considered will aim to perform a pinpoint landing by steering the vehicle to null final position and velocity.  

The first set of simulations is performed in the conditions of section IV with a vehicle mass constant, i.e. with $q=0$. In the usual way, we would minimize fuel consumption, as in the next example, with a cost proportional to \[ J = \int \limits_0^{t_f} \|u\| dt.\]
Here, we minimize this cost to obtain the solution of an equivalent problem. It is straightforward to show that all results presented in Section III and IV remain valid under these conditions. Note that the assumption of the Point $1$ of Corollary \ref{co:contacts} $u_{min} < \frac{m_0 g_0}{T}$  is verified.  

\def\scal{1}
\def\w{0.35\linewidth}
\def\h{1*\w}
\def\tickfont{\small}
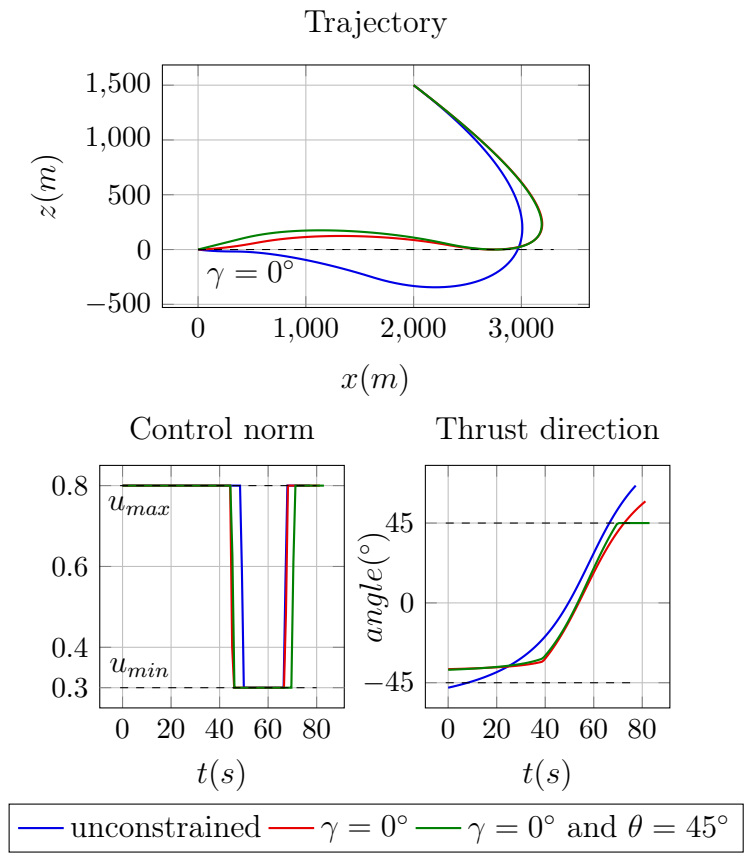
\begin{figure}
\centering
\begin{center}
\begin{tabular}{lr}
\multicolumn{2}{c}{
\begin{tikzpicture}[scale=\scal, baseline,trim axis left]
\begin{axis}[%
 	mark size = 1pt,
    width= 1.5*\w ,
    height = \h ,
	grid = major,
	xlabel = $x(m)$,
	ylabel = $z(m)$,
	title = {Trajectory},
	tick label style={font=\tickfont},
	legend columns= -1,
    legend entries = {unconstrained, $\gamma = 0^\circ$, $\gamma = 0^\circ$ and $\theta = 45^\circ$},
    legend to name =  {named},
]

\addplot[black!10!blue, thick] table [x index = {1}, y index = {2}, col sep = comma] {trajq0wc.csv};
\addplot[black!10!red, thick] table [x index = {1}, y index = {2}, col sep = comma] {trajq0ac.csv};
\addplot[black!50!green, thick] table [x index = {1}, y index = {2}, col sep = comma] {trajq0pac.csv};
\addplot[black, dashed, domain = -1:3300] {0.0} node[pos=0.15,below]{$\gamma = 0^\circ$};
\end{axis}
\end{tikzpicture}
}
\\
\hspace{0.1cm}
\begin{tikzpicture}[scale=\scal,baseline,trim axis left]
\begin{axis}[%
 	mark size = 1pt,
    width= \w ,
    height = \h ,
	grid = major,
	xlabel = $t(s)$,
	title = {Control norm},
	extra y ticks = {0.0, 0.3},
	tick label style={font=\tickfont},
]

\addplot[black!10!blue, thick] table [x index = {0}, y index = {3}, col sep = comma] {contq0wc.csv};
\addplot[black!10!red, thick] table [x index = {0}, y index = {3}, col sep = comma] {contq0ac.csv};
\addplot[black!50!green, thick] table [x index = {0}, y index = {3}, col sep = comma] {contq0pac.csv};
\addplot[black, dashed, domain = -1:80] {0.8}node[pos=0.1,below]{$u_{max}$};
\addplot[black, dashed, domain = -1:80] {0.3}node[pos=0.1,above]{$u_{min}$};
\end{axis}
\end{tikzpicture}
&
\hspace{0.5cm}
\begin{tikzpicture}[scale=\scal, baseline,trim axis left]
\begin{axis}[%
 	mark size = 1pt,
    width= \w ,
    height = \h ,
	grid = major,
	xlabel = $t(s)$,
	ylabel = $angle (^\circ)$,	
	title = {Thrust direction},
    ytick ={-45,0, 45},
    ylabel style={at={(-0.1,0.5)}},
	tick label style={font=\tickfont},
]

\addplot[black!10!blue, thick] table [x index= {0}, y expr = atan(\thisrowno{1}/\thisrowno{2}), col sep = comma] {contq0wc.csv};
\addplot[black!10!red, thick] table [x index= {0}, y expr = atan(\thisrowno{1}/\thisrowno{2}), col sep = comma] {contq0ac.csv};
\addplot[black!50!green, thick] table [x index= {0}, y expr = atan(\thisrowno{1}/\thisrowno{2}), col sep = comma] {contq0pac.csv};

\addplot[black, dashed, domain = -1:75] {45} node[pos = 1, below]{};
\addplot[black, dashed, domain = -1:75] {-45}node[pos = 0.9, above]{};
\end{axis}
\end{tikzpicture}
\end{tabular}
\ref*{named}
\end{center}     
\captionof{figure}{Simulation results for $q=0$ without glide-slope or pointing constraint (blue), with only a glide-slope constraint of $\gamma = 0^\circ$ (red) and with glide-slope and pointing constraints for $\theta = 45^\circ$ (green). }\label{fi:MarsLandq0}
\end{figure}

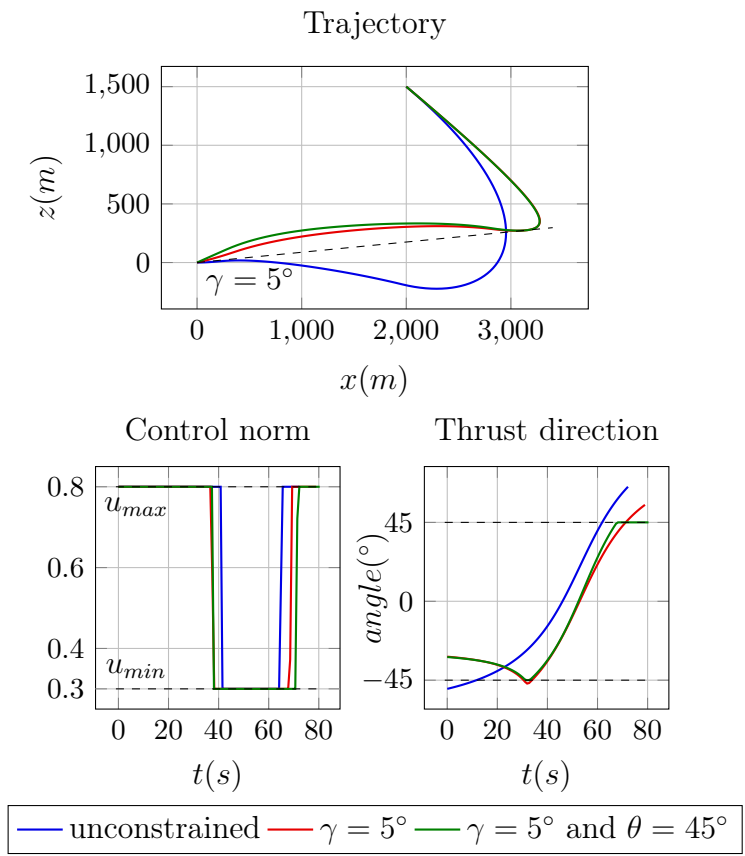
\begin{figure}
\centering
\begin{center}
\begin{tabular}{lr}
\multicolumn{2}{c}{
\begin{tikzpicture}[scale=\scal, baseline,trim axis left]
\begin{axis}[%
 	mark size = 1pt,
    width= 1.5*\w ,
    height = \h ,
	grid = major,
	xlabel = $x(m)$,
	ylabel = $z(m)$,
	title = {Trajectory},
	tick label style={font=\tickfont},
	legend columns= -1,
    legend entries = {unconstrained, $\gamma = 5^\circ$, $\gamma = 5^\circ$ and $\theta = 45^\circ$},
    legend to name =  {named},
]

\addplot[black!10!blue, thick] table [x index = {1}, y index = {2}, col sep = comma] {trajwc.csv};
\addplot[black!10!red, thick] table [x index = {1}, y index = {2}, col sep = comma] {trajgsc.csv};
\addplot[black!50!green, thick] table [x index = {1}, y index = {2}, col sep = comma] {trajpgsc.csv};
\addplot[black, dashed, domain = -1:3400] {tan(5)*x} node[pos=0.15,below]{$\gamma = 5^\circ$};
\end{axis}
\end{tikzpicture}
}
\\
\hspace{0.1cm}
\begin{tikzpicture}[scale=\scal,baseline,trim axis left]
\begin{axis}[%
 	mark size = 1pt,
    width= \w ,
    height = \h ,
	grid = major,
	xlabel = $t(s)$,
	title = {Control norm},
	extra y ticks = {0.0, 0.3},
	tick label style={font=\tickfont},
]

\addplot[black!10!blue, thick] table [x index = {0}, y index = {3}, col sep = comma] {contwc.csv};
\addplot[black!10!red, thick] table [x index = {0}, y index = {3}, col sep = comma] {contgsc.csv};
\addplot[black!50!green, thick] table [x index = {0}, y index = {3}, col sep = comma] {contpgsc.csv};
\addplot[black, dashed, domain = -1:80] {0.8}node[pos=0.1,below]{$u_{max}$};
\addplot[black, dashed, domain = -1:80] {0.3}node[pos=0.1,above]{$u_{min}$};
\end{axis}
\end{tikzpicture}
&
\hspace{0.5cm}
\begin{tikzpicture}[scale=\scal, baseline,trim axis left]
\begin{axis}[%
 	mark size = 1pt,
    width= \w ,
    height = \h ,
	grid = major,
	xlabel = $t(s)$,
	ylabel = $angle (^\circ)$,	
	title = {Thrust direction},
    ytick ={-45,0, 45},
    ylabel style={at={(-0.1,0.5)}},
	tick label style={font=\tickfont},
]
\addplot[black!10!blue, thick] table [x index= {0}, y expr = atan(\thisrowno{1}/\thisrowno{2}), col sep = comma] {contwc.csv};
\addplot[black!10!red, thick] table [x index= {0}, y expr = atan(\thisrowno{1}/\thisrowno{2}), col sep = comma] {contgsc.csv};
\addplot[black!50!green, thick] table [x index= {0}, y expr = atan(\thisrowno{1}/\thisrowno{2}), col sep = comma] {contpgsc.csv};
\addplot[black, dashed, domain = -1:80] {45} node[pos = 1, below]{};
\addplot[black, dashed, domain = -1:80] {-45}node[pos = 0.9, above]{};
\end{axis}
\end{tikzpicture}
\\
\end{tabular}
\ref*{named}
\end{center}     
\captionof{figure}{Simulation results with a varying mass, and a glide-slope constraint of $\gamma = 5^\circ$. }\label{fi:MarsLand}
\end{figure}
Three simulations were performed using the conditions described above. The first executes a Mars landing if no pointing and altitude constraint is applied. Figure \ref{fi:MarsLandq0} shows that in that case, the zero altitude is crossed. In the second simulation, an altitude constraint has been added. We see that, in accordance with the results of Corollary \ref{co:contacts}, there are two contact points with the state constraint along the trajectory: one during the first Bang arc and the final point. However, the angle of the thrust exceeds $45^\circ$ at the end of the trajectory. Finally, in the third example, a pointing constraint of $45^\circ$ is added. There are again two contacts with the state constraints, and the pointing constraint is active during the last $16s$ of the trajectory, which has the effect of making the trajectory more vertical. \par

A second set of simulations has been performed with a varying mass, with $q = 8.4294 kg/s$, considering a glide-slope constraint of $\gamma = 5^\circ$ and maximizing the final mass of the vehicle: \[ J =  - m(t_f) = - m(0) + \int \limits_0^{t_f} q\|u\| dt .\] In that case, the constraints are more compelling, as we see on the green plot on Figure \ref{fi:MarsLand} that the thrust direction is saturated by the pointing constraint on both its inferior and its superior bounds. There is still one contact point with the state constraint, although we leave the framework of Corollary \ref{co:contacts}. Remark that in all examples the form of the control is Max-Min-Max, and even the switching times varies little from one simulation to another.

\appendix

\section{Appendix}
\begin{lem}
\label{le:kkt}
The solutions of the maximization problem
\begin{equation}
\max \limits_d \sca{p_v}{d} \quad \text{under the conditions} \,
\begin{cases}
 c_+ (d) = \sca{e_z}{ d} - \cos(\theta) \geq 0, \\
 c_1 (d) = \sca{d}{d} -1 = 0,
\end{cases}
\end{equation}
are the vectors $d \in \mathcal{S}^2$ satisfying
\begin{equation}
d  =
\begin{cases}
\frac{p_v}{\Vert p_v \Vert} \; & \normalfont\text{if}\, p_{v_z} \geq \Vert p_v \Vert \cos(\theta) \ \text{and} \ p_v \neq 0,\\[1mm]
\left( \sin(\theta) \frac{\overline p_v}{\Vert \overline p_v \Vert},  \cos(\theta) \right) \,  & \normalfont\text{if}\, p_{v_z} < \Vert p_v \Vert \cos(\theta) \ \text{and} \ \overline p_v \neq 0,\\[1mm]
\Big(  \sin(\theta) \delta,  \cos(\theta) \Big) \ \normalfont \text{where}\ \delta \in \mathcal{S}^1 \ & \normalfont\text{if}\ p_{v_z} < \Vert p_v \Vert \cos(\theta) \ \text{and} \ \overline p_v = 0.
\end{cases}
\end{equation}
\end{lem}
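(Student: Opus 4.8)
The plan is to treat this as a constrained optimization problem on the compact spherical cap $\mathcal{D} = \{d \in \mathcal{S}^2 : \sca{e_z}{d} \geq \cos(\theta)\}$ and to characterize its maximizers through first-order (KKT) conditions, keeping in mind that $\mathcal{D}$ is \emph{not} convex, so these conditions will only be necessary. Since the objective $d \mapsto \sca{p_v}{d}$ is continuous and $\mathcal{D}$ is compact, a maximizer exists. Introducing the Lagrangian $L(d,\lambda,\nu) = \sca{p_v}{d} + \lambda\, c_+(d) - \nu\, c_1(d)$ with multiplier $\lambda \geq 0$ for the inequality and $\nu \in \R$ for the equality, the stationarity condition $\nabla_d L = 0$ reads $p_v + \lambda e_z = 2\nu\, d$; thus every candidate maximizer has the form $d = (p_v + \lambda e_z)/(2\nu)$, subject to complementary slackness $\lambda\, c_+(d) = 0$.

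First I would treat the case $\lambda = 0$ (pointing constraint inactive). Then $d = p_v/(2\nu)$, and the normalization $\|d\| = 1$ together with maximality of $\sca{p_v}{d}$ force $d = p_v/\|p_v\|$ when $p_v \neq 0$. This candidate lies in $\mathcal{D}$ — hence is admissible — exactly when $\sca{e_z}{d} = p_{v_z}/\|p_v\| \geq \cos(\theta)$, which is the first case of the statement. When instead $\lambda > 0$, complementary slackness gives $\sca{e_z}{d} = \cos(\theta)$, i.e. $d_z = \cos(\theta)$ and $\|\overline d\| = \sin(\theta)$. Writing $\sca{p_v}{d} = \sca{\overline p_v}{\overline d} + p_{v_z}\cos(\theta)$ and maximizing the free part over $\overline d$ of fixed norm $\sin(\theta)$, the Cauchy–Schwarz inequality yields $\overline d = \sin(\theta)\, \overline p_v/\|\overline p_v\|$ when $\overline p_v \neq 0$ (second case), while every $\overline d = \sin(\theta)\,\delta$ with $\delta \in \mathcal{S}^1$ is optimal when $\overline p_v = 0$ (third case).

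The main obstacle is that, $\mathcal{D}$ being nonconvex, the KKT conditions alone neither single out the global maximizer among the candidates nor directly produce the sharp threshold $p_{v_z}$ versus $\|p_v\|\cos(\theta)$ that separates the cases. To settle this cleanly I would reduce the problem to one dimension: for each fixed height $s = d_z \in [\cos(\theta),1]$, the inner maximization over $\overline d$ with $\|\overline d\| = \sqrt{1-s^2}$ has value $g(s) = \|\overline p_v\|\sqrt{1-s^2} + p_{v_z}\, s$, so the whole problem reduces to maximizing $g$ on $[\cos(\theta),1]$. A direct computation shows that $g$ is concave, with unconstrained critical point at $s^\ast = p_{v_z}/\|p_v\|$, which corresponds precisely to $d = p_v/\|p_v\|$. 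Hence if $s^\ast \geq \cos(\theta)$ the maximum is attained at the interior point $p_v/\|p_v\|$, whereas if $s^\ast < \cos(\theta)$ then $g$ is decreasing on $[\cos(\theta),1]$ and the maximum is attained at the boundary $s = \cos(\theta)$; the degenerate subcase $\overline p_v = 0$, where $g$ is affine, is handled identically. This concavity argument simultaneously certifies global optimality and yields exactly the threshold $p_{v_z} \geq \|p_v\|\cos(\theta)$ appearing in the statement, completing the proof.
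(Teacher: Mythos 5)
Your proof is correct. Its first half --- the stationarity condition $p_v + \lambda e_z = 2\nu\, d$, the dichotomy on whether the pointing constraint is active, and the Cauchy--Schwarz maximization of $\langle \overline p_v, \overline d\rangle$ at fixed $\|\overline d\|$ --- is essentially the paper's own argument (the paper writes the multiplier equation as $p_v + 2\lambda d + \lambda_+ e_z = 0$, solves for $d$ explicitly, and resolves the sign ambiguity $\epsilon=\pm 1$ by appealing to maximality). Where you genuinely depart is the final reduction to the scalar problem $\max_{s\in[\cos\theta,1]} g(s)$ with $g(s)=\|\overline p_v\|\sqrt{1-s^2}+p_{v_z}\,s$: the paper never takes this step, and leaves implicit the verification that the threshold $p_{v_z}\gtrless\|p_v\|\cos\theta$ is precisely what decides between the interior and the boundary KKT points (in the paper this is hidden in the sign requirement $\lambda_+\ge 0$, which is equivalent to $p_{v_z}\le\|p_v\|\cos\theta$ but is not explicitly checked). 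Your concavity argument buys a genuine certificate of global optimality on the nonconvex set $\mathcal{D}$, derives the threshold transparently, and in fact renders your KKT paragraph redundant --- the scalar reduction alone is a complete and more elementary proof, and it also yields uniqueness of the maximizer in the first two cases via strict concavity. The only cosmetic caveat is that $g$ has infinite one-sided slope at $s=1$ when $\overline p_v\neq 0$; since all you use is concavity of $g$ on $[\cos\theta,1]$ (a nonnegative multiple of a concave function plus an affine one), this is harmless.
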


\begin{proof}
Set $f = \sca{p_v}{d}$.
Denoting by $\lambda_+$ and $\lambda$ the Lagrange multipliers associated to the conditions $c_+$ and $c_1$, the Karush-Kuhn-Tucker optimality conditions write as
\begin{equation}
\begin{cases}
\nabla f(d) + \lambda \nabla c_1(d) + \lambda_+ \nabla c_+(d) =0\\
c_+ (d)\geq 0, \; c_1(d) = 0, \\
\lambda_+ c_+(d) = 0,\\
\lambda_+ \geq 0,
\end{cases}
\end{equation}
where the gradients are given by
\begin{equation}
\nabla f (d)= p_v, \qquad \nabla c_1(d) = 2d, \qquad \nabla c_+ (d)= e_z.
\end{equation}

\underline{Case 1: $c_+(d) >0$.} Then $\lambda_+ = 0$, $p_v + 2\lambda d =0$, $ \sca{d}{d} = 1$, and we get the following alternative. Either $p_v=0$ and then every $d \in \mathcal{S}^2$ is a maximum. Or $d = \epsilon \frac{p_v}{\Vert p_v \Vert}$ with $\epsilon = \pm 1$, and since $d$ maximizes $f$, then $\epsilon = 1$.

\underline{Case 2: $c_+(d) =0$.} Then $d_z=\cos \theta$, $p_v + 2\lambda d + \lambda_+ e_z =0$, $\sca{d}{d} =1$, and we get the following alternative. Either $p_v=p_{v_z} e_z$ (i.e.\ $\overline p_v=0$) with $p_{v_z}\leq 0$, and in this case, choosing $\lambda_+=-p_{v_z}$ we obtain that every $d$ of the form $\begin{pmatrix} \sin(\theta) \delta, & \cos(\theta)\end{pmatrix}$, with $\delta \in \mathcal{S}^1$, is maximum. Or $p_v + \lambda_+ e_z \neq 0$ and so $d = \epsilon\frac{p_v + \lambda_+ e_z}{\Vert p_v + \lambda_+ e_z\Vert }$ with $\epsilon = \pm 1$. In that case, $\epsilon (p_{v_z} + \lambda_+) = \Vert p_v + \lambda_+ e_z \Vert \cos(\theta)$ and
\begin{align*}
 d &= \begin{pmatrix}  \frac{ \epsilon p_{v_x}}{\Vert p_v + \lambda_+ e_z \Vert} \\ \frac{\epsilon p_{v_y}}{\Vert p_v + \lambda_+ e_z \Vert} \\ \frac{ \epsilon p_{v_z}}{\Vert p_v + \lambda_+ e_z \Vert} \end{pmatrix} = \cos(\theta) \begin{pmatrix} \frac{p_{v_x}}{(p_{v_z} + \lambda_+)} \\ \frac{p_{v_y}}{ (p_{v_z} + \lambda_+)} \\ 1 \end{pmatrix}.
  \end{align*}
As $d$ is unit, we have
\[
\Vert d \Vert ^2 = 1 = \cos(\theta) ^2 \left( 1 + \frac{p_{v_x}^2 + p_{v_y}^2}{( p_{v_z} + \lambda_+) ^2}
\right) = \cos(\theta) ^2 \left( 1 + \frac{\Vert \overline p_v \Vert^2}{( p_{v_z} + \lambda_+) ^2}
\right).
\]
We deduce that
\[ \sin(\theta) ^2 = \cos(\theta)^2 \frac{ \Vert \overline p_v \Vert^2}{(p_{v_z}+ \lambda_+)^2}, \qquad \hbox{so} \qquad  \frac{ \cos \theta}{(p_{v_z}+ \lambda_+)}=\epsilon \frac{\sin \theta}{\Vert \overline p_v \Vert}.
\]
Thus,
\[ d = \begin{pmatrix} \epsilon \sin(\theta) \frac{p_{v_x}}{\Vert \overline{p_v} \Vert} \\ \epsilon \sin(\theta) \frac{p_{v_y}}{\Vert \overline{p_v} \Vert} \\ \cos(\theta)\end{pmatrix},
\]
and the parameter $\epsilon$ is equal to $+1$ since $d$ maximizes $f$.
\end{proof}

\bibliographystyle{ieeetr}
\bibliography{biblioPaper}
\addtolength{\textheight}{-12cm}

\end{document}